\numberwithin{equation}{section}
\theoremstyle{plain}
\newtheorem{theorem}{Theorem}[section]
\newtheorem{lemma}[theorem]{Lemma}
\theoremstyle{remark}
\newtheorem{remark}[theorem]{Remark}
\newtheorem*{ack}{Acknowledgement}
\theoremstyle{definition}
\newcommand{\R}{\mathbb{R}}
\newcommand{\N}{\mathbb{N}}
\newcommand{\HH}{\mathcal{H}}
\newcommand{\PP}{\mathcal{P}}
\newcommand{\QQ}{\mathcal{Q}}
\newcommand{\eps}{\varepsilon}
\newcommand{\khii}{\text{\lower -.4ex\hbox{$\chi$}}}
\newcommand{\roo}{\varrho}
\DeclareMathOperator{\dimm}{dim_M}
\DeclareMathOperator{\dimh}{dim_H}
\DeclareMathOperator{\dimp}{dim_p}
\DeclareMathOperator{\por}{por}
\DeclareMathOperator{\card}{card}
\DeclareMathOperator{\diam}{diam}
\newcommand{\yli}[2]{\genfrac{}{}{0pt}{}{#1}{#2}}
\begin{document}

\title{Nonsymmetric conical upper density and $k$-porosity}

\author{Antti K\"aenm\"aki}
\author{Ville Suomala}
\address{Department of Mathematics and Statistics \\
         P.O. Box 35 (MaD) \\
         FI-40014 University of Jyv\"askyl\"a \\
         Finland}
\email{antakae@maths.jyu.fi}
\email{visuomal@maths.jyu.fi}

\thanks{AK acknowledges the support of the Academy of Finland (project \#114821)}
\subjclass[2000]{Primary 28A75; Secondary 28A78, 28A80.}
\keywords{Conical density, porosity, Hausdorff dimension.}
\date{\today}

\begin{abstract}
  We study how the Hausdorff measure is distributed in
  nonsymmetric narrow cones in $\R^n$. As an application, we
  find an upper bound close to $n-k$ for the Hausdorff dimension of
  sets with large $k$-porosity.
  With $k$-porous sets we mean sets which have holes in $k$
  different directions on every small scale.
\end{abstract}

\maketitle

\section{Introduction}

It is a well known fact that for a set $A \subset \R^n$ with finite
$s$-dimensional Hausdorff measure, $\HH^s(A)<\infty$, we have
\begin{equation}\label{eq:bd}
  1 \le \limsup_{r \downarrow 0} \frac{\HH^s\bigl( A \cap B(x,r)
  \bigr)}{r^s} \le 2^s
\end{equation}
for $\HH^s$-almost every $x \in A$. For a proof, see, for example,
\cite[Theorem 6.2(1)]{ma}. This is analogous to the classical Lebesgue
Density Theorem. Using this fact, we know roughly how much of $A$ there
is in small balls. Mattila \cite{ma2} studied how $A$ is distributed
in such balls. He was able to estimate how much of $A$ there is near
$(n-m)$-planes. More precisely, assuming $0 \le m < s \le n$ and
denoting
\begin{equation*}
\begin{split}
  X(x,V,\alpha) &= \{ y \in \R^n : \text{dist}(y-x,V)
    < \alpha|y-x| \}, \\
  X(x,r,V,\alpha) &= X(x,V,\alpha) \cap B(x,r),
\end{split}
\end{equation*}
as $x \in \R^n$, $V \in G(n,m)$, $r>0$, and $0<\alpha\le 1$, he proved
that there exists a constant $c=c(n,m,s,\alpha)>0$ such that
\begin{equation}\label{eq:ma}
  \limsup_{r \downarrow 0} \inf_{V \in G(n,n-m)} \frac{\HH^s\bigl( A
  \cap X(x,r,V,\alpha) \bigr)}{r^s} \geq c
\end{equation}
for $\HH^s$-almost every $x \in A$ whenever $A \subset \R^n$ is such
that $\HH^s(A) < \infty$. Here $G(n,m)$ denotes the collection of all
$m$-dimensional linear subspaces of $\R^n$, see \cite[\S
  3.9]{ma}. Actually \eqref{eq:ma} is
just a special case of Mattila's result, as his theorem can be applied
also for more general cones, see \cite[Theorem 3.3]{ma2}.


In Theorem
\ref{thm:nonsymmetric_conical} we show that if $A$ is as above, then
it cannot be concentrated in too small regions, not even inside the
cones $X(x,r,V,\alpha)$. More precisely, denoting
\begin{equation*}
\begin{split}
  H(x,\theta) &= \{ y \in \R^n : (y-x)\cdot\theta > 0 \}, \\
  H(x,\theta,\eta) &= \{ y \in \R^n : (y-x)\cdot\theta
    > \eta|y-x| \},
\end{split}
\end{equation*}
for $x \in \R^n$, $\theta \in S^{n-1}$, and $0<\eta\le 1$, we prove
under the same assumptions as in \eqref{eq:ma} that
there exists a constant $c=c(n,m,s,\alpha,\eta)>0$ such that
\begin{equation*}
  \limsup_{r \downarrow 0} \inf_{\yli{\theta \in S^{n-1}}{V \in
  G(n,n-m)}} \frac{\HH^s\bigl( A \cap X(x,r,V,\alpha) \setminus
  H(x,\theta,\eta) \bigr)}{r^s} \geq c
\end{equation*}
for $\HH^s$-almost every $x \in A$. Here $S^{n-1}$ denotes the unit sphere of $\R^n$. To help the geometric visualization, it might be helpful to take $\alpha$ and $\eta$ close to $0$ and $\theta \in V \cap S^{n-1}$. Our method gives also a more elementary
proof for \eqref{eq:ma} and it can also be used to obtain similar
results for more general measures, see Theorem \ref{thm:h}.

The nonsymmetric conical upper density theorem is essential in our
application to $k$-porous sets, that is, the sets with $\por_k > 0$, see
\eqref{eq:poronen_joukko}. The notation of porosity, or
$1$-porosity using our terminology, has arisen from the study of
dimensional estimates related, for example, to the boundary behavior of
quasiconformal mappings. See Koskela and Rohde \cite{KR}, Martio and
Vuorinen \cite{MV}, Sarvas \cite{sar}, Trocenko \cite{T}, and
V\"ais\"al\"a \cite{V}. The dimensional properties of $1$-porous sets are well known. Using a version of \eqref{eq:ma}, Mattila showed that if porosity is close to its maximum value $\tfrac12$, then the dimension cannot be much bigger than $n-1$. More precisely,
\begin{equation}\label{eq:1por}
\sup\{ s>0 :
\por_1(A) > \roo \text{ and } \dimh(A)>s \text{ for some } A \subset
\R^n \} \longrightarrow n-1
\end{equation}
as $\roo\to\tfrac{1}{2}$. Here $\dimh$ refers to the Hausdorff
dimension. Later Salli \cite{sa2} generalized this result for the
Minkowski dimension, and found the correct asymptotics. The concept of
$1$-porosity has also been generalized for measures, and it leads to
similar kind of dimension bounds. See J\"arvenp\"a\"a and
J\"arvenp\"a\"a \cite{JJ} and references therein.

Motivated by the fact that each $V \in G(n,n-1)$ has maximal
$1$-porosity, we introduce a porosity condition which describes also
sets whose dimension is smaller than $n-1$.
For any integer $0 < k \le n$, $x \in \R^n$, $A \subset \R^n$, and $r > 0$ we set
\begin{align} \label{eq:porodef1}
  \por_k(A,x,r) = \sup\{ \roo : \; &\text{there are }\notag
  z_1,\ldots,z_k \in \R^n \text{ such that } \notag \\
  &B(z_i,\roo r)\subset B(x,r)\setminus
  A \text{ for every } i, \\
  &\text{and }(z_i-x)
  \cdot(z_j-x)=0 \text{ for } i \neq j \}. \notag
\end{align}
Here $\cdot$ is the inner product.
The \emph{$k$-porosity of $A$ at a point $x$} is defined to be
\begin{equation*}
  \por_k(A,x) = \liminf_{r \downarrow 0} \por_k(A,x,r),
\end{equation*}
  and the \emph{$k$-porosity of $A$} is given by
\begin{equation} \label{eq:poronen_joukko}
  \por_k(A) = \inf_{x \in A} \por_k(A,x).
\end{equation}

This means that $k$-porous sets have holes in $k$ orthogonal directions
near each of its points in every small scale.
We shall now give a concrete example where $k$-porosity occurs
naturally. Suppose $0<\lambda<\tfrac{1}{2}$ and  let
$C_\lambda\subset\R$ be the
usual $\lambda$-Cantor set, see \cite[\S 4.10]{ma}. It is clearly a
$1$-porous set with $\por_1(C_\lambda) \approx \tfrac{1}{2} -
\lambda$. Mattila's result \eqref{eq:1por}
implies that
$\dimh(C_\lambda) \to 0$ as $\por_1(C_\lambda) \to \tfrac{1}{2}$. Of
course, we could obtain the same information just by calculating the
Hausdorff dimension of the self-similar set $C_\lambda$ and letting
$\lambda \to 0$, but our aim was to provide the reader with an
illustrative example. The sets $C_\lambda \times C_\lambda \subset \R^2$
and $C_\lambda \times C_\lambda \times [0,1] \subset \R^3$ are clearly
$2$-porous with $\por_2 \approx \tfrac{1}{2} - \lambda$. For these
sets \eqref{eq:1por} does not give any reasonable dimension
bound. However, it would
be desirable to see, also in terms of porosity, that $\dimh(C_\lambda
\times C_\lambda) \to 0$ and $\dimh(C_\lambda \times C_\lambda \times
[0,1]) \to 1$ as $\lambda \to 0$. This follows as an immediate
application of Theorem \ref{thm:iso_poro}. Using our nonsymmetric
conical upper density theorem, we show that
\begin{equation*}
\sup\{ s>0 :
\por_k(A) > \roo \text{ and } \dimh(A)>s \text{ for some } A \subset
\R^n \} \longrightarrow n-k
\end{equation*}
as $\roo \to \tfrac{1}{2}$.
Observe also that in the proof of Theorem \ref{thm:iso_poro} the
orthogonality in \eqref{eq:porodef1} plays no r\^ole
and we may replace it by an assumption of a
uniform lower bound for the angles between $z_i-x$ and the $(k-1)$-plane
spanned by vectors $z_j-x$, $i\neq j$.

Let us now discuss the situation when porosity is small. It is well
known (for example, see \cite{MV}) that if $A\subset\R^n$ with
$\por_1(A,x,r) \ge \roo > 0$ for all $x\in A$ and $0<r<r_0$, then
\begin{equation}\label{eq:bigporo}
\dimm(A)<n-c\roo^n,
\end{equation}
where $c>0$ depends only on $n$, and $\dimm$ refers to the Minkowski
dimension, see \cite[\S 5.3]{ma}.
It might be possible to get a better estimate if $\por_1$ is replaced
by $\por_k$ for some $k>1$, but this condition does not feel very
natural if the size of the holes is small.
However, if $V \in G(n,m)$ is fixed and
the condition $\por_1(A,x,r) \ge \roo$ is replaced by
\begin{equation*}
    \sup\bigl\{\roo' : B(z,\roo' r)\subset B(x,r)\setminus A\text{ for some
    }z\in V+\{x\}\bigr\}\geq\roo,
\end{equation*}
then $n$ in \eqref{eq:bigporo} can be replaced by $m$,
see Theorem \ref{thm:sporo}. This is a rather immediate consequence of
\eqref{eq:bigporo}, but our main point is to give a simple
proof for \eqref{eq:bigporo} using iterated function systems.

\begin{ack} 
  The authors are indebted to Professor Pertti Mattila for his
  valuable comments for the manuscript. The authors thank also
  Esa J\"arvenp\"a\"a, Maarit J\"arvenp\"a\"a, Pekka Koskela, Tomi
  Nieminen, Kai Rajala and Eero Saksman for useful
  discussions during the preparation of this article.
\end{ack}

\section{Nonsymmetric conical upper density}

We shall
first prove a density theorem for nonsymmetric regions and then
prove our main theorem by using a
similar argument on $(n-m)$-planes. The proofs rely on the
following geometric fact.

\begin{figure}
  \psfrag{z}{$z$}
  \psfrag{w}{$w$}
  \begin{center}
    \includegraphics[width=0.5\textwidth]{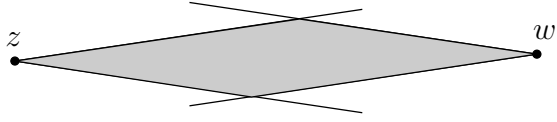}
  \end{center}
  \caption{All points lying on the gray region form a large angle
  with points $z$ and $w$.}
  \label{fig:kulma}
\end{figure}

\begin{lemma}\label{lemma:EF}
  For given $0<\beta<\pi$, there is $q=q(n,\beta) \in \N$ such that in
  any set of $q$ points in $\R^n$, there are always three points which
  determine an angle between $\beta$ and $\pi$.
\end{lemma}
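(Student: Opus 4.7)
The plan is a Ramsey-type pigeonhole on the directions determined by pairs of the $q$ points. Fix $\delta$ with $2\delta<\pi-\beta$ and cover $S^{n-1}$ by finitely many open spherical caps $C_1,\ldots,C_N$ of angular radius $\delta$; such a cover exists by compactness of the sphere, and $N=N(n,\delta)$.

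Now take $q$ distinct points $p_1,\ldots,p_q \in \R^n$ (coincidences only make the task easier, since repeated points produce angles equal to $\pi$) and color each unordered pair $\{i,j\}$ with $i<j$ by the index of any cap $C_k$ containing the unit vector $u_{ij}:=(p_j-p_i)/|p_j-p_i|$. This is an $N$-coloring of the edges of $K_q$, so provided $q$ is at least the classical $N$-color Ramsey number for triangles, there exist indices $i<j<k$ for which $u_{ij}$, $u_{ik}$, and $u_{jk}$ all lie in a common cap of angular radius $\delta$ about some $v\in S^{n-1}$. To conclude, consider the angle at $p_j$: the direction from $p_j$ to $p_i$ is $-u_{ij}$, within angular distance $\delta$ of $-v$, and the direction to $p_k$ is $u_{jk}$, within $\delta$ of $v$. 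By the spherical triangle inequality the angle between these two unit vectors is at least $\pi-2\delta>\beta$, and it is trivially at most $\pi$. Taking $\delta=(\pi-\beta)/3$ and setting $q(n,\beta)$ equal to the $N(n,\delta)$-color Ramsey number for triangles then completes the argument.

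The substantive ingredients are compactness of $S^{n-1}$ (to produce a finite cover by small caps) and Ramsey's theorem (to produce a monochromatic triangle), both classical. The only genuine verification is the angle estimate $\pi-2\delta$ at $p_j$, a routine application of the triangle inequality on the sphere. I do not expect a real obstacle; if a cleaner quantitative bound on $q(n,\beta)$ were desired, one could replace Ramsey with an iterative pigeonhole in which one successively passes to sub-collections whose pairwise directions concentrate in finer caps, but this refinement is unnecessary for the existence statement.
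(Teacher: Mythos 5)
Your proof is correct but takes a genuinely different route from the paper. The paper partitions $\R^n\setminus\{0\}$ into finitely many narrow cones $C_i=H(0,\theta_i,\eta)$ and runs a bare-hands recursive pigeonhole on the point set: starting from a chosen ``corner'' it subdivides according to which cone each remaining point lies in, and the key lemma (illustrated in Figure~\ref{fig:kulma}) is that two nested corners pointing along roughly antipodal cone directions force the corresponding cone intersection to be disjoint from $A$, which bounds the recursion depth by the number of cones. You instead color each pair $\{i,j\}$ with $i<j$ by a small spherical cap containing $u_{ij}$ and invoke graph Ramsey for a monochromatic triangle; the geometric finish at the middle vertex $p_j$ uses the same core fact that near-antipodal directions from a point give a near-$\pi$ angle. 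Your argument is cleaner to state but hides the combinatorics inside the Ramsey black box, and yields a comparably large (far from the Erd\H{o}s--F\"uredi optimum) bound on $q$; the paper's explicit recursion makes the $q\le\sum_{j=0}^k k^j$ bound visible. One subtlety worth flagging: you genuinely need the monochromatic \emph{triangle}, not merely a monochromatic path of length two, because the angle estimate only succeeds at the vertex whose index lies between the other two in the linear order --- only then do $-u_{ij}$ and $u_{jk}$ point near-oppositely, whereas at an extremal vertex both edge-directions would share an orientation and give a small angle. Your proof correctly exploits the triangle to guarantee a median vertex, so this is not a gap, but a reader should not be tempted to weaken the Ramsey step to a degree pigeonhole without also accounting for the orientation of the coloring.
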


\begin{remark}
  Erd\H{o}s and F\"uredi \cite{EF} have shown that for the smallest
  possible choice of $q$ it holds that
  \begin{equation*}
    2^{(\pi/(\pi-\beta))^{n-1}}\leq
    q(n,\beta)\leq2^{(4\pi/(\pi-\beta))^{n-1}} + 1.
  \end{equation*}
  For the convenience of the reader we shall give below a different
  proof which establishes the existence of some such $q$. The estimate
  that we get here for $q$ is, however, quite bad compared to the best
  possible one.
\end{remark}

\begin{proof}
  Let $A$ be a set of points in $\R^n$ so that all angles formed by
  its points are less than $\beta$. Let us fix $0<\eta<1$ and cover
  $\R^n\setminus\{0\}$ by cones
  $C_i=H(0,\theta_i,\eta)$, $i \in \{ 1,2,\ldots,k \}$, where the constant $k=k(n,\eta) \in \N$ depends only on $n$ and $\eta$. To visualize the situation, note that if $\beta$ is close to $\pi$, then $\eta$ is close to $1$ and cones $C_i$ are very narrow. To simplify the notation, we
  denote $C_{i,y}=C_i+\{y\}$ for $y \in \R^n$.

  For any index $i_1i_2\cdots i_j$, where $j\in\N$ and
  $i_m\in\{1,2,\ldots,k\}$ for $1\leq m\leq j$, we define sets
  $A_{i_1i_2\cdots i_j}$ in the following way: We begin by fixing $x\in
  A$ and setting $A_i=A\cap C_{i,x}$ for $1\leq i\leq k$. If
  $A_{i_1i_2\cdots i_j}$ has been defined, we choose $y \in
  A_{i_1i_2\cdots i_j}$ and define $A_{i_1i_2\cdots i_j
  l}=A_{i_1i_2\cdots i_j}\cap C_{l,y}$ for $1\leq l\leq k$
  (if $A_{i_1i_2\cdots i_j}$ is empty, then so is $A_{i_1i_2\cdots i_j
  l}$). We refer to $y$ as the corner of $A_{i_1i_2\cdots i_j l}$. It
  follows directly from the definition of the sets $A_{i_1i_2\cdots
  i_j}$ that
  \begin{equation*}
    \card A_{i_1i_2\cdots i_j}\leq 1+\sum\limits_{l=1}^{k}\card
    A_{i_1i_2\cdots i_jl}.
  \end{equation*}
  Iterating this, we get
  \begin{equation}\label{eq:rekursio}
    \card A\leq\sum\limits_{j=0}^{k}k^j + \sum_{i_1i_2\cdots i_k}
    \sum\limits_{l=1}^{k} \card A_{i_1i_2\cdots i_k l}.
  \end{equation}

  The main point of the proof is the observation that if
  $\eta=\eta(\beta)$ is chosen to be close enough to $1$ in the beginning,
  then the following is true: If $z$ and $w$ are the corners of
  $A_{i_1i_2\cdots i_j}$ and $A_{i_1i_2\cdots i_ji_{j+1}\cdots i_m}$,
  respectively, and if $z\in C_{i_m, w}$, then
  $A\cap C_{i_j,z}\cap C_{i_m,w}=\emptyset$. See Figure
  \ref{fig:kulma}.
  It follows by induction from the above fact that for given
  $A_{i_1i_2\cdots i_j}$ we have
  \begin{equation*}
    \card\{l : A_{i_1i_2\cdots i_jl}\neq\emptyset\}\leq k-j.
  \end{equation*}
  In particular, $A_{i_1i_2\cdots i_{k+1}}=\emptyset$ for any choice of
  $i_1i_2\ldots i_{k+1}$. Combined with \eqref{eq:rekursio}, this
  gives $\card A \leq \sum_{j=0}^{k} k^j$. This number depends
  only on $k=k(n,\beta)$ and the claim follows.
\end{proof}

For $0 < \eta \le 1$ we define
\begin{align*}
  t(\eta) &= \sqrt{\frac{\eta^2 + 4}{\eta^2}}, \\
  \gamma(\eta) &= \frac{1}{t(\eta)}.
\end{align*}
Notice that $t(\eta) \ge 2$ and $\eta/\sqrt{5} \le \gamma(\eta) \le
\eta/2$.

\begin{lemma} \label{thm:etamato}
  Suppose that $y \in \R^n$, $\theta \in S^{n-1}$, $0<\eta\le 1$,
  $t=t(\eta)$, and $\gamma=\gamma(\eta)$. If $z \in \R^n \setminus
  \bigl( B(y,tr) \cup H(y,\theta,\gamma) \bigr)$, then
  \begin{equation*}
    B(z,r) \cap H(y,\theta,\eta) = \emptyset.
  \end{equation*}
\end{lemma}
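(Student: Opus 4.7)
The lemma is a pure piece of planar geometry once one observes that both the hypothesis and the conclusion are invariant under rotations about the axis $y+\R\theta$: the ball $B(y,tr)$ and the two cones $H(y,\theta,\eta)$, $H(y,\theta,\gamma)$ all share this rotational symmetry. Consequently the Euclidean distance from $z$ to $H(y,\theta,\eta)$ depends only on $|z-y|$ and on the angle between $z-y$ and $\theta$, and I would reduce to the $2$-plane spanned by $\theta$ and the component of $z-y$ perpendicular to $\theta$. Choosing coordinates in that plane so that $y$ is the origin and $\theta=(1,0)$, I would write $z=(d\cos\alpha,d\sin\alpha)$ with $d=|z-y|\ge tr$ and $\alpha\in[0,\pi]$. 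The assumption $z\notin H(y,\theta,\gamma)$ becomes $\alpha\ge\psi:=\arccos\gamma$, and the trace of $H(y,\theta,\eta)$ in the upper half-plane is the open wedge bounded by the ray through the origin at angle $\phi:=\arccos\eta<\psi$.

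It then suffices to verify that the Euclidean distance from $z$ to this wedge is at least $r$, as that already forces $B(z,r)\cap H(y,\theta,\eta)=\emptyset$. I would split on whether the foot of the perpendicular from $z$ to the line carrying the boundary ray lies on the ray itself. If $\alpha-\phi\ge\pi/2$, the closest wedge point is the vertex $y$, and the distance equals $d\ge tr\ge 2r$, using $t(\eta)\ge 2$. Otherwise $\alpha-\phi\in[\psi-\phi,\pi/2)$, and an elementary right-triangle computation gives distance $d\sin(\alpha-\phi)\ge d\sin(\psi-\phi)$, since $\sin$ is increasing on $[0,\pi/2]$.

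What remains is the trigonometric inequality $\sin(\psi-\phi)\ge 1/t$ for the specified $\gamma=1/t$. Using $\cos\phi=\eta$, $\cos\psi=\gamma$, the sine-subtraction formula, and $\gamma^{2}=\eta^{2}/(\eta^{2}+4)$, one computes
\[
\sin(\psi-\phi)=\eta\sqrt{1-\gamma^{2}}-\gamma\sqrt{1-\eta^{2}}=\frac{\eta}{\sqrt{\eta^{2}+4}}\bigl(2-\sqrt{1-\eta^{2}}\bigr)\ge\frac{\eta}{\sqrt{\eta^{2}+4}}=\frac{1}{t},
\]
since $\sqrt{1-\eta^{2}}\le 1$. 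Combined with $d\ge tr$ this yields the required bound $d\sin(\alpha-\phi)\ge r$. The only place that calls for any care is the degenerate case in which the closest wedge point is the vertex rather than the interior of the boundary ray; the rotational reduction to two dimensions dispatches essentially all of the remaining geometric content, and the choice $t(\eta)=\sqrt{(\eta^{2}+4)/\eta^{2}}$ is exactly what is needed to make the two estimates fit together.
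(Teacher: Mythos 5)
Your argument is correct, and it reaches the same conclusion by a somewhat different route than the paper. The paper's proof stays coordinate-free: it picks the point $w$ of $\overline{B(z,r)}$ maximizing the direction cosine $(w-y)\cdot\theta/|w-y|$, observes that this $w$ is a tangent point so that $|w-y|=r\sqrt{s^2-1}$ with $s=|y-z|/r$, and then bounds $(w-y)\cdot\theta\le r+\gamma|y-z|$ and invokes the single algebraic inequality $\eta\sqrt{s^2-1}\ge 1+\gamma s$ for $s\ge t$. You instead reduce to the $2$-plane through $y$, $\theta$, $z$ by the rotational symmetry about $y+\R\theta$, recast the claim as a Euclidean distance estimate $\mathrm{dist}(z,H(y,\theta,\eta))\ge r$, split on whether the orthogonal foot lies on the bounding ray or at the vertex, and verify the trigonometric inequality $\sin(\psi-\phi)\ge 1/t$ with $\psi=\arccos\gamma$, $\phi=\arccos\eta$. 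Both hinge on the same geometric fact about how far a narrow cone of half-angle $\arccos\eta$ stays from points outside a wider cone of half-angle $\arccos\gamma$ and outside $B(y,tr)$; the paper's version is more compact once one grants the tangent-point identification, while yours is more explicit about the $2$D reduction and replaces the single algebraic inequality by a two-case distance computation plus a sine-subtraction identity. Both are valid, and your computation confirming $\sin(\psi-\phi)=\frac{\eta}{\sqrt{\eta^2+4}}\bigl(2-\sqrt{1-\eta^2}\bigr)\ge 1/t$ is correct.
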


\begin{proof}
  Take $w \in \R^n$ such that it maximizes $(w-y)\cdot\theta/|w-y|$ in
  the closure of $B(z,r)$. It suffices to prove that
  $(w-y)\cdot\theta/|w-y| < \eta$, see Figure \ref{fig:skaalalemma}.
  It is straightforward to check that 
  $\eta\sqrt{s^2-1}\geq 1+\gamma s$ when $s\geq t$.
  Denoting now $s = |y-z|/r$, we have $s\geq t>1$ 
  and thus
 \begin{equation*}
 \begin{split}
    (w-y)\cdot\theta &< r+\gamma|y-z| = (1+\gamma s)r \\
    &\le \eta\sqrt{s^2-1}r = \eta|w-y|,
  \end{split}
  \end{equation*}
  which finishes the proof.
\end{proof}

\begin{figure}
  \psfrag{y}{$y$}
  \psfrag{(s2-1)r}{$\sqrt{s^2-1}r$}
  \psfrag{sr}{$sr$}
  \psfrag{w}{$w$}
  \psfrag{r}{$r$}
  \psfrag{z}{$z$}
  \psfrag{theta}{$\theta$}
  \begin{center}
    \includegraphics[width=0.65\textwidth]{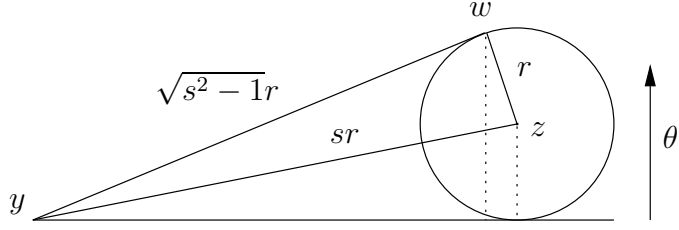}
  \end{center}
  \caption{Illustration for the proof of Lemma \ref{thm:etamato}.}
  \label{fig:skaalalemma}
\end{figure}

\begin{theorem} \label{thm:tylppakulma}
  Suppose $0 < \eta \le 1$ and $0 < s \le n$. Then there is a constant
  $c=c(n,s,\eta)>0$ such that
  \begin{equation*}
    \limsup_{r \downarrow 0} \inf_{\theta\in S^{n-1}} \frac{\HH^s\bigl( A
    \cap B(x,r) \setminus H(x,\theta,\eta) \bigr)}{r^s} \geq c
  \end{equation*}
  for $\HH^s$ almost every $x \in A$ whenever $A\subset\R^n$ with
  $\HH^s(A)<\infty$.
\end{theorem}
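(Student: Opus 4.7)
The plan is to argue by contradiction. Suppose there exist $c_0, r_0 > 0$ and $F \subset A$ with $\HH^s(F) > 0$ such that for every $x \in F$ and every $r \in (0, r_0]$ some direction $\theta(x, r) \in S^{n-1}$ satisfies
\[
\HH^s\bigl( A \cap B(x, r) \setminus H(x, \theta(x, r), \eta) \bigr) < c_0 r^s.
\]
After an Egorov-type reduction combined with the upper bound in \eqref{eq:bd}, I may assume this holds uniformly on $F$ and that $\HH^s(A \cap B(x, r)) \le C r^s$ uniformly for $x \in F$ and $r \in (0, r_0]$.

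The geometric core would combine Lemmas \ref{lemma:EF} and \ref{thm:etamato}. Choose $\beta \in \bigl( 2 \arccos \gamma(\eta), \pi \bigr)$, let $q = q(n, \beta)$ be the constant from Lemma \ref{lemma:EF}, and pick $\rho \in (0, 1)$ so small that every ball $B(x, r)$ with $\HH^s(F \cap B(x, r)) \ge r^s/4$ contains at least $q + 1$ points of $F$ pairwise separated by $\ge \rho r$; this follows from the uniform upper density bound. At a $\HH^s|_A$-density point $x_0 \in F$, \eqref{eq:bd} yields arbitrarily small $r \le r_0/3$ with $\HH^s(F \cap B(x_0, r)) \ge r^s/4$. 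Pick $q + 1$ such points $x_1, \dots, x_{q + 1} \in F \cap B(x_0, r)$; Lemma \ref{lemma:EF} produces three of them, say $x_i, x_j, x_l$, with $\angle x_i x_l x_j \ge \beta$. Applying the failure hypothesis at $x_l$ at scale $3 r$ yields $\theta_l$. Since $\beta > 2 \arccos \gamma(\eta)$, the vectors $x_i - x_l$ and $x_j - x_l$ cannot both lie in the narrow cone $H(x_l, \theta_l, \gamma(\eta)) - x_l$, so I may assume $x_i \notin H(x_l, \theta_l, \gamma(\eta))$. Then Lemma \ref{thm:etamato} applied with $y = x_l$, $z = x_i$, and $r^\ast = |x_i - x_l|/t(\eta)$ gives $B(x_i, r^\ast) \cap H(x_l, \theta_l, \eta) = \emptyset$, and the inclusion $B(x_i, r^\ast) \subset B(x_l, 3 r)$ is immediate from $|x_i - x_l| \le 2 r$ and $t(\eta) \ge 2$. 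Therefore
\[
\HH^s\bigl( A \cap B(x_i, r^\ast) \bigr) \le \HH^s\bigl( A \cap B(x_l, 3 r) \setminus H(x_l, \theta_l, \eta) \bigr) < c_0 (3 r)^s \le c_0 \bigl( 3 t(\eta)/\rho \bigr)^s (r^\ast)^s.
\]

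The hard part will be extracting an actual contradiction from this upper bound. Fixing $c_0 < \tfrac{1}{2} \bigl( \rho/(3 t(\eta)) \bigr)^s$ forces the density of $A$ at $x_i$ at scale $r^\ast$ below $1/2$, and \eqref{eq:bd} is then supposed to deliver a matching lower bound of $(r^\ast)^s/2$. The obstacle is that \eqref{eq:bd} only guarantees density $\ge 1$ along \emph{some} sequence of radii specific to each point, and this sequence need not contain the particular scale $r^\ast \in [\rho r/t(\eta), 2 r/t(\eta)]$ produced by the construction. I would close this gap by further restricting $F$, via Egorov, to a positive-measure subset carrying a common geometric sequence of good density scales, and then choose the initial $r$ so that the associated $r^\ast$ lands in this common sequence for the specific $x_i$ obtained. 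After this scale-matching, the two bounds clash and the theorem follows.
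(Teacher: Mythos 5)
The core geometric ideas (Lemma \ref{lemma:EF} to find a large angle among well-separated points, then Lemma \ref{thm:etamato} to carve out a small ball disjoint from the excluded half-cone) are exactly the ones the paper uses, and your application of them is sound. The gap lies where you yourself flag it: you select the $q+1$ points purely by a separation criterion and then try to obtain a positive lower bound for $\HH^s\bigl(A\cap B(x_i,r^\ast)\bigr)$ afterwards from the lower density bound in \eqref{eq:bd}. That cannot be repaired by an Egorov argument. The lower bound in \eqref{eq:bd} is a $\limsup$, not an a.e.\ limit, so it carries no regularity across a fixed sequence of scales; there is no reason a positive-measure subset of $A$ should admit a common geometric sequence $\{r_j\}$ along which density is bounded below at every point. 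Worse, the scale $r^\ast$ and the particular point $x_i$ at which you would need the lower bound are both outputs of the earlier choices (the outer $r$, the selection of $q+1$ separated points, and which of the three produced by Lemma \ref{lemma:EF} exits the narrow cone), so even a ``common sequence'' would not help: you would need the lower bound for \emph{every} $x_i$ at \emph{every} scale in the whole interval $[\rho r/t(\eta),\,2r/t(\eta)]$.

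The paper avoids this by inverting the order of quantifiers: rather than first choosing separated points and then asking for measure near them, it pigeonholes the measure $\HH^s\bigl(B\cap B(x,r)\bigr)>\tfrac12 r^s$ among covers of $B\cap B(x,r)$ by $4^n\delta^{-n}$ balls of radius $\delta r$, repeatedly removing a $t\delta r$-neighborhood of each chosen point. This forces the existence of $q$ points $x_1,\dots,x_q$ that are $t\delta r$-separated \emph{and} each already carries measure $\HH^s\bigl(B\cap B(x_i,\delta r)\bigr)>c(n,s,\eta)(3r)^s$, so no lower density estimate at the $x_i$ is required. The lower bound in \eqref{eq:bd} is used only once, at the outer scale $r$. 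If you replace your separation-then-density step with this pigeonhole selection, the rest of your argument goes through essentially as written.
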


\begin{proof}
  Take $c>0$ and assume that there exists a Borel set $B \subset \R^n$
  with $\HH^s(B) > 0$ such that for each $x \in B$ and $0<r<r_0$ there
  is $\theta \in S^{n-1}$ for which
  \begin{equation} \label{eq:ristiriita}
    \HH^s\bigl( B \cap B(x,r) \setminus H(x,\theta,\eta) \bigr) < cr^s.
  \end{equation}
  It suffices to find a positive lower bound for $c$ in terms of $n$,
  $s$, and $\eta$.

  Using \eqref{eq:bd}, and replacing $B$ by a suitable subset if
  necessary, we may assume that
  \begin{equation} \label{eq:yksrasti}
    \HH^s\bigl( B \cap B(x,r) \bigr) < 2^{s+1}r^s
  \end{equation}
  for all $0<r<r_0$ and $x \in B$. Moreover, using the lower estimate
  of \eqref{eq:bd}, we find
  $0<r<r_0/3$ and $x \in B$ such that
  \begin{equation} \label{eq:kaksrasti}
    \HH^s\bigl( B \cap B(x,r) \bigr) > \tfrac{1}{2}r^s.
  \end{equation}

  Set $t = t(\eta)$, $\gamma=\gamma(\eta)$, and take $0 < \delta < 1$.
  Let us fix $\beta<\pi$ such that the opening angle of
  $H(x,\theta,\gamma)$ is smaller than $\beta$, and let $q=q(n,\beta)$
  be as in Lemma \ref{lemma:EF}.
  We may cover the set $B \cap B(x,r)$ by
  $4^n\delta^{-n}$ balls of radius $\delta r$ with centers
  in $B$. Using  \eqref{eq:kaksrasti}, we notice that there exists
  $x_1 \in B \cap B(x,r)$ such that
  \begin{equation*}
    \HH^s\bigl( B \cap B(x_1,\delta r) \bigr) > 4^{-n}\delta^n
    2^{-1}r^s.
  \end{equation*}
  The set $B \cap B(x,r) \setminus B(x_1,t\delta r)$ can also be covered
  by $4^n\delta^{-n}$ balls of radius $\delta r$ with centers in
  $B$. Whence, using \eqref{eq:yksrasti} and \eqref{eq:kaksrasti},
  \begin{equation*}
    \HH^s\bigl( B \cap B(x,r) \setminus B(x_1,t\delta r) \bigr) > (\tfrac{1}{2}
    - 2^{s+1}t^s\delta^s)r^s.
  \end{equation*}
  If $\tfrac{1}{2} - 2^{s+1}t^s\delta^s > 0$, we find $x_2 \in B \cap
  B(x,r) \setminus B(x_1,t\delta r)$ for which
  \begin{equation*}
    \HH^s\bigl( B \cap B(x_2,\delta r) \bigr) >
    4^{-n}\delta^n(\tfrac{1}{2} - 2^{s+1}t^s\delta^s)r^s.
  \end{equation*}
  Choosing $\delta = \delta(n,s,\eta) > 0$ small enough and continuing
  in this manner, we find $q$ points $x_1,\ldots,x_q \in B \cap B(x,r)$
  with $|x_i - x_j| \geq t\delta r$ for $i \ne j$, such that for each
  $i \in \{ 1,\ldots,q \}$ we have
  \begin{equation} \label{eq:kolmerasti}
  \begin{split}
    \HH^s\bigl( B \cap B(x_i,\delta r) \bigr) &> 4^{-n}\delta^n\bigl(
    \tfrac{1}{2} - (q-1)2^{s+1}t^s\delta^s \bigr)r^s \\ &
    =: c(n,s,\eta)(3r)^s,
  \end{split}
  \end{equation}
  where $c(n,s,\eta)>0$.

  According to Lemma \ref{lemma:EF}, we may choose three points
  $y,y_1,y_2$ from the set $\{ x_1,\ldots,x_q \}$ such that for each
  $\theta \in S^{n-1}$ there is $i\in\{1,2\}$ for which $y_i \in \R^n \setminus
  \bigl( B(y,t\delta r) \cup H(y,\theta,\gamma) \bigr)$. We obtain, using
  Lemma \ref{thm:etamato}, that for each $\theta \in S^{n-1}$ there is
  $i\in\{1,2\}$ such that
  \begin{equation*}
    B(y_i,\delta r) \subset B\bigl( y,2(1+\delta)r \bigr) \setminus
    H(y,\theta,\eta).
  \end{equation*}
  Thus, applying (\ref{eq:kolmerasti}), we have
  \begin{equation*}
    \HH^s\bigl( B \cap B(y,3r) \setminus H(y,\theta,\eta) \bigr) >
    c(n,s,\eta)(3r)^s
  \end{equation*}
  for all $\theta \in S^{n-1}$. Recalling \eqref{eq:ristiriita},
  we conclude that $c \geq c(n,s,\eta)$. The proof is finished.
\end{proof}

\begin{theorem} \label{thm:nonsymmetric_conical}
  Suppose $0 < \alpha,\eta \le 1$ and $0 \le m < s \le n$. Then there
  is a constant $c=c(n,m,s,\alpha,\eta)>0$ such that
  \begin{equation*}
    \limsup_{r \downarrow 0} \inf_{\yli{\theta \in S^{n-1}}{V \in
    G(n,n-m)}} \frac{\HH^s\bigl( A \cap X(x,r,V,\alpha) \setminus
    H(x,\theta,\eta) \bigr)}{r^s} \geq c
  \end{equation*}
  for $\HH^s$ almost every $x \in A$ whenever $A\subset\R^n$ with
  $\HH^s(A)<\infty$.
\end{theorem}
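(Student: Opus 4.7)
The plan is to adapt the contradiction-based packing argument of Theorem~\ref{thm:tylppakulma} to the conical setting, performing the iterative packing inside a narrow sub-cone $X(x,3r,V,\alpha')$ (with $\alpha'\in(0,\alpha)$) rather than inside the ball $B(x,3r)$, and using Mattila's conical density estimate \eqref{eq:ma} (applied with opening $\alpha'$) in place of the lower half of \eqref{eq:bd} to supply the initial amount of mass available inside the cone. This single substitution is what makes the conical shape $X(x,r,V,\alpha)$ appear in the conclusion.

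More precisely, I would assume for a contradiction the existence of $c>0$, a Borel set $B\subset A$ with $\HH^s(B)>0$, and, for each $x\in B$ and $0<r<r_0$, a pair $(V(x,r),\theta(x,r))\in G(n,n-m)\times S^{n-1}$ such that
\[
  \HH^s\bigl(B\cap X(x,r,V(x,r),\alpha)\setminus H(x,\theta(x,r),\eta)\bigr)<cr^s,
\]
the target being a positive lower bound for $c$ in terms of $n,m,s,\alpha,\eta$. Using \eqref{eq:bd} I would first shrink $B$ to a subset on which $\HH^s(B\cap B(y,r'))<2^{s+1}(r')^s$ for all $y\in B$ and $0<r'<r_0$. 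Then, for a suitably chosen $\alpha'\in(0,\alpha)$, \eqref{eq:ma} applied to $B$ with opening $\alpha'$ produces $x\in B$ and $0<r<r_0/3$ with $\HH^s(B\cap X(x,3r,W,\alpha'))\geq c_0(3r)^s$ for every $W\in G(n,n-m)$, where $c_0=c_0(n,m,s,\alpha')>0$. Writing $V=V(x,3r)$ and $\theta=\theta(x,3r)$, I would run verbatim the covering/exclusion iteration of Theorem~\ref{thm:tylppakulma} inside $X(x,3r,V,\alpha')$: for $\delta=\delta(n,m,s,\alpha,\alpha',\eta)>0$ small enough this produces $q=q(n,\beta)$ points $x_1,\ldots,x_q\in B\cap X(x,3r,V,\alpha')$, pairwise $(t\delta r)$-separated, with $\HH^s(B\cap B(x_i,\delta r))\geq c_2(3r)^s$, where $t=t(\eta)$, $\gamma=\gamma(\eta)$, and $\beta<\pi$ is chosen so that each $H(\cdot,\cdot,\gamma)$ has opening below $\beta$. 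Lemmas~\ref{lemma:EF} and~\ref{thm:etamato} applied to $\{x_1,\ldots,x_q\}$ then extract $y,y_1,y_2$ with the property that for every $\theta'\in S^{n-1}$ some $i\in\{1,2\}$ satisfies $B(y_i,\delta r)\cap H(y,\theta',\eta)=\emptyset$, and taking $\delta$ small enough relative to $\alpha-\alpha'$ also yields $B(y_i,\delta r)\subset X(x,3r,V,\alpha)$.

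To close the argument I would apply the negated conclusion \emph{at the packing point $y\in B$} and at scale $3r$, giving adversarial $V(y,3r)$ and $\theta(y,3r)$, and then set $\theta'=\theta(y,3r)$ in the previous display so that the secured mass $c_2(3r)^s$ sits in $X(x,3r,V,\alpha)\setminus H(y,\theta(y,3r),\eta)$. The main obstacle I foresee is the reconciliation of the direction $V$ used to build the packing around $x$ with the a priori unrelated adversarial direction $V(y,3r)$ at $y$: the inclusion $B(y_i,\delta r)\subset X(x,3r,V,\alpha)$ does not automatically give $B(y_i,\delta r)\subset X(y,3r,V(y,3r),\alpha)$. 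I plan to handle this by arranging from the outset that \eqref{eq:ma} holds simultaneously at $x$ and at the packing points $y$ (permitted since the estimate holds $\HH^s$-a.e.\ and the packing points can be taken from this full-measure set), and then rerunning the same cone-packing at $y$ with opening $\alpha'$ around the adversarial $V(y,3r)$; the resulting lower bound $\HH^s(B\cap X(y,3r,V(y,3r),\alpha)\setminus H(y,\theta(y,3r),\eta))\geq c_2(3r)^s$ then contradicts the assumed upper bound $c(3r)^s$, forcing $c\geq c_2(n,m,s,\alpha,\eta)$.
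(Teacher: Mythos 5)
Your overall strategy (contradiction, use density bound \eqref{eq:bd} to control mass in balls, iterative packing, Lemma~\ref{lemma:EF} plus Lemma~\ref{thm:etamato} to extract a wide angle) is correct, but the two things you identify as loose ends are precisely the things the paper must do differently, and your patches do not close them.

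First, the ``reconciliation'' obstacle is genuine and your proposed fix is circular. The packing argument, whether run around $x$ or around $y$, produces its conclusion \emph{at the central packing point}: you get three points $z,y_1,y_2$ and the mass ends up in a set of the form $X(\cdot,3r,V,\alpha)\setminus H(z,\theta',\eta)$ with $z$ one of the packing points, not at the base point where you applied the hypotheses. To contradict the negated statement you then need to compare this with $X(z,\cdot,V(z,\cdot),\alpha)\setminus H(z,\theta(z,\cdot),\eta)$, i.e.\ with the adversary's $V$-choice \emph{at $z$}. Rerunning the packing at $y$ around $V(y,3r)$ just produces a new central point $z'$ with adversarial $V(z',\cdot)$ a priori unrelated to $V(y,3r)$; the regress never terminates. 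The paper cuts this loop \emph{before} the packing: it takes a finite $\alpha/2$-net $V_1,\ldots,V_l$ of $G(n,n-m)$, pigeonholes the bad set $B$ as $\bigcup_i B_i$ according to which $V_i$ the adversary may use (see~\eqref{eq:antiteesi}), selects $i_0$ so that $B_{i_0}$ carries a definite fraction of the mass, and then runs the entire packing inside $B_{i_0}$. Since every packing point, in particular the central one, lies in $B_{i_0}$, the adversary's choice at $z$ is automatically $V_{i_0}$, the very plane around which the packing was built. Without this pigeonholing your argument does not close.

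Second, building the packing inside the narrow cone $X(x,3r,V,\alpha')$ is not sufficient. The cone widens linearly away from its vertex $x$: two packing points $z,w\in X(x,3r,V,\alpha')$ at distance $\approx t\eps r$ from each other but far from $x$ can satisfy $\mathrm{dist}(w-z,V)\approx\alpha' r \gg \alpha |w-z|$, so $w\notin X(z,3r,V,\alpha/2)$. Trying to kill this by taking $\alpha'$ tiny relative to $\eps$ creates a dependency loop, since the constant $c_0$ you import from~\eqref{eq:ma} would then depend on $\alpha'$, which depends on $\eps$, which depends on $c_0$. The paper sidesteps this by working in a thin \emph{slab} $P_{V_{i_0}^\perp}^{-1}\bigl(B(y,\eps r)\bigr)\cap B(x,r)$ of uniform width $\eps r$ rather than a cone, and it gets this slab by covering $(V_{i_0}^\perp+\{x\})\cap B(x,r)$ by $\sim\eps^{-m}$ balls of radius $\eps r$ and picking a heavy one via~\eqref{eq:yksrasti}--\eqref{eq:alaarvio}; no appeal to~\eqref{eq:ma} is needed, and indeed the paper advertises this route as giving an independent and more elementary proof of~\eqref{eq:ma}. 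Within a slab, separation $\geq t\eps r$ with $t\geq 5/\alpha$ does force the packing points into each others' $\alpha/2$-cones around $V_{i_0}$, which is what makes the geometry go through.
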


\begin{proof}
  For any $V,W\in G(n,n-m)$, we set $d(V,W) = \sup_{x \in V \cap S^{n-1}}
  \text{dist}(x,W)$. With this metric $G(n,n-m)$ is a compact metric
  space, see Salli
  \cite{Sa}. Defining for each $V \in G(n,n-m)$ a set $\{ W : d(V,W) <
  \alpha/2 \}$ we notice
  that a finite
  number of these sets is still a cover. We assume that the sets assigned to
  the planes $V_1,\ldots,V_l$, where $l=l(n,m,\alpha)$, cover
  $G(n,n-m)$. For any $W$, it holds that $d(V_i,W) < \alpha/2$ with some
  $i \in \{1,\ldots,l\}$.
  This implies $X(0,V_i,\alpha/2) \subset X(0,W,\alpha)$. Thus, for each $W \in
  G(n,n-m)$, there is $i$ such that
  \begin{equation} \label{eq:inkluusio}
    X(x,r,W,\alpha) \supset X(x,r,V_i,\alpha/2)
  \end{equation}
  for all $r>0$ and $x \in \R^n$. We shall prove that if $A\subset\R^n$
  with $\HH^s(A)<\infty$, then
  \begin{equation*}
    \limsup_{r \downarrow 0} \inf_{\yli{\theta \in
    S^{n-1}}{i \in \{ 1,\ldots,l\} }} \frac{\HH^s\bigl( A \cap
    X(x,r,V_i,\alpha/2) \setminus H(x,\theta,\eta) \bigr)}{r^s} \geq
    c(n,m,s,\alpha,\eta)
  \end{equation*}
  for $\HH^s$ almost every $x \in A$
  from which the claim follows easily by using (\ref{eq:inkluusio}).

  Take $c>0$ and assume that there is a Borel set $B \subset \R^n$
  with $\HH^s(B)>0$ such that for each $x \in B$ and $0<r<r_0$ there
  are $i$ and $\theta \in S^{n-1}$ for which
  \begin{equation*}
    \HH^s\bigl( B \cap X(x,r,V_i,\alpha/2) \setminus H(x,\theta,\eta)
    \bigr) < cr^s.
  \end{equation*}
  According to \eqref{eq:bd} we may assume that
  \begin{equation}\label{eq:x}
    \HH^s\bigl( B \cap B(x,r) \bigr) < 2^{s+1}r^s
  \end{equation}
  for all $0<r<r_0$ and $x \in B$. Using the lower estimate of
  \eqref{eq:bd}, we find $0 <
  r < r_0/3$ and $x \in B$ such that
  \begin{equation} \label{eq:alaarvio}
    \HH^s\bigl( B \cap B(x,r) \bigr) > \tfrac{1}{2}r^s.
  \end{equation}
  Next we define
  \begin{equation} \label{eq:antiteesi}
  \begin{split}
    B_i = \bigl\{ z \in B : \HH^s\bigl( B \cap X(z,3r,V_i,\alpha/2)
    \setminus &H(z,\theta,\eta) \bigr) < c(3r)^s \\ &\text{for some } \theta\in
    S^{n-1} \bigr\}.
  \end{split}
  \end{equation}
  Since $\bigcup_{i=1}^l B_i = B$, we infer from \eqref{eq:alaarvio}
  that there is $i_0 \in \{ 1,\ldots,l \}$ for which
  \begin{equation*}
    \HH^s\bigl( B_{i_0} \cap B(x,r) \bigr) > 2^{-1}l^{-1}r^s.
  \end{equation*}

   Let $t=\max\{5/\alpha, t(\eta)\}$, choose $q=(n,\eta)$ as in
   the proof of
  Theorem \ref{thm:tylppakulma}, and define $0<\eps<1$ so that
  \begin{equation}\label{eq:xxx}
    4^{-m}2^{-1}l^{-1}\eps^m-(q-1)2^{s+1}t^s\eps^s=4^{-m-1}l^{-1}\eps^m\,,
  \end{equation}
  recall that $s>m$ so that this is possible.
  Since the set
  $(V_{i_0}^\bot + \{ x \}) \cap B(x,r)$ may
  be covered by $4^m\eps^{-m}$ balls of radius $\eps r$, there
  exists $y \in (V_{i_0}^\bot + \{ x \}) \cap B(x,r)$ such that
  \begin{equation}\label{eq:xx}
    \HH^s\bigl( B_{i_0} \cap B(x,r) \cap P_{V_{i_0}^\bot}^{-1}(B(y,\eps r))
    \bigr) > 4^{-m}2^{-1}l^{-1}\eps^m r^s.
  \end{equation}
  We now argue as in the proof of Theorem \ref{thm:tylppakulma}
  above. We first observe that the slice
  $S=B_{i_0} \cap B(x,r) \cap P_{V_{i_0}^\bot}^{-1}\bigl( B(y,\eps r) \bigr)$ may be
  covered by $c_{1}^{-1}\eps^{m-n}$ balls of radius $\eps r$ for a
  constant $c_{1}=c_{1}(n,m)>0$. Then we use \eqref{eq:xx},
  \eqref{eq:x}, and \eqref{eq:xxx}  to find points
  $\{x_1,\ldots,x_q\}\in S$ such that
  $|x_i-x_j|\geq t\varepsilon r$ whenever $i\neq j$ and
  \begin{equation} \label{eq:xxxx}
  \begin{split}
    \HH^s\bigl( S \cap B(x_i,\eps r) \bigr) &>  c_1\eps^{n-m}\left(4^{-m}2^{-1}l^{-1}\eps^m
    r^s-(q-1)2^{s+1}t^s\eps^sr^s\right) \\ &= c_2 (3r)^s
  \end{split}
  \end{equation}
  for all $i$. Here
  $c_2=c_2(n,m,s,\alpha,\eta)=c_13^{-s}4^{-m-1}l^{-1}\varepsilon^m$.
  Now the same geometric argument as in the proof of Theorem
  \ref{thm:tylppakulma} implies that there is a point $z
  \in\{x_1,\ldots,x_q\}$ such that for each
  $\theta\in S^{n-1}$ we may find
  $w\in\{x_0,\ldots,x_q\}\setminus\{z\}$ so that
  \begin{equation*}
    B(w,\eps r) \subset
    B\bigl(z,(2+\eps)r\bigr) \setminus \bigl(H(z,\theta,\eta)\cap
    B(z,4\eps r/\alpha)\bigr).
  \end{equation*}
  Since also
  \begin{equation*}
    P_{V_{i_0}^\perp}^{-1}\bigl(B(y,\eps r)\bigr) \cap
    B(z,3r)\setminus B(z,4\eps
    r/\alpha) \subset X(z,3r,V_{i_0},\alpha/2),
  \end{equation*}
  see Figure \ref{fig:nonsymmetric_conical}, we get
  \begin{equation*}
  \inf\limits_{\theta\in S^{n-1}} \HH^s\bigl(B\cap
  X(z,3r,V_{i_0},\alpha/2)\setminus H(z,\theta,\eta)\bigr) \geq
  c_2(3r)^s.
  \end{equation*}
  by \eqref{eq:xxxx}. Now $z \in B_{i_0}$ and we conclude, using
  \eqref{eq:antiteesi},
  that $c \geq c_2=c_2(n,m,s,\alpha,\eta)$. This completes the proof.
\end{proof}

\begin{figure}
  \psfrag{w}{$w$}
  \psfrag{y}{$y$}
  \psfrag{z}{$z$}
  \psfrag{4er/a}{$4\eps r/\alpha$}
  \psfrag{x}{$x$}
  \psfrag{r}{$r$}
  \psfrag{V+x}{$V+\{ x \}$}
  \begin{center}
    \includegraphics[width=0.45\textwidth]{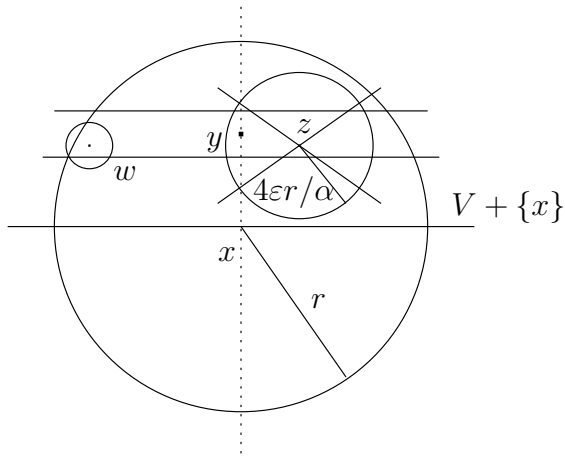}
  \end{center}
  \caption{Illustration for the proof of Theorem \ref{thm:nonsymmetric_conical}.}
  \label{fig:nonsymmetric_conical}
\end{figure}

\begin{remark}
  Inspecting the proofs, one can read explicit expressions for the constants in Theorems \ref{thm:tylppakulma} and \ref{thm:nonsymmetric_conical}. In Theorem \ref{thm:tylppakulma}, one gets $c \ge 2^{c_1/(-s\eta^{n-1})}$ and in Theorem \ref{thm:nonsymmetric_conical}, one obtains $c \ge \alpha^{c_3/(s-m)} \, 2^{c_2/((m-s)\eta^{n-1})}$. The constants $0<c_1,c_2,c_3<\infty$ here depend only on $n$. The estimates obtained in this way are probably rather far from being optimal, although the best values are not known.
\end{remark}

Our method can be applied also in a more general setting. A similar
proof as above gives the following result. If $\mu$ is a measure on
$\R^n$, $h\colon(0,r_0)\rightarrow(0,\infty)$, and $x\in\R^n$, we
define $\overline{D}(\mu,x)$ and $\underline{D}(\mu,x)$ as the lower
and upper limits, respectively, of the ratio $\mu\bigl( B(x,r) \bigr)/h(r)$ as $r\downarrow 0$.

\begin{theorem}\label{thm:h}
Suppose $0 \le m < n$ and $h\colon (0,r_0)\rightarrow (0,\infty)$
is a function with
\begin{equation}\label{eq:h}
  \frac{h(\eps r)}{\eps^m h(r)} \longrightarrow 0 \qquad
    \text{uniformly for all } 0<r<r_0
\end{equation}
as $\eps \downarrow 0$. Let $\mu$ be a measure on $\R^n$ with
$\overline{D}(\mu,x)<\infty$ for $\mu$-almost all $x\in\R^n$.
For every  $0<\alpha,\eta\leq 1$, there is a constant
$c=c(n,m,h,\alpha,\eta)>0$ such that
\begin{equation*}
  \limsup_{r \downarrow 0} \inf_{\yli{\theta \in S^{n-1}}{V \in
  G(n,n-m)}} \frac{\mu\bigl(X(x,r,V,\alpha) \setminus
  H(x,\theta,\eta) \bigr)}{h(r)} \ge c\overline{D}(\mu,x)
\end{equation*}
for $\mu$-almost every $x\in\R^n$.
\end{theorem}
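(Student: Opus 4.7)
The plan is to rerun the proof of Theorem~\ref{thm:nonsymmetric_conical} with three systematic substitutions: the Hausdorff measure $\HH^s$ becomes $\mu$, the gauge $r^s$ becomes $h(r)$, and the strict inequality $s>m$ (used in \eqref{eq:xxx} to guarantee $\eps^s\ll\eps^m$) is replaced by the decay hypothesis \eqref{eq:h}. The geometric Lemmas~\ref{lemma:EF} and~\ref{thm:etamato} are independent of the measure and transfer verbatim; only the measure-theoretic inputs need to be rephrased.

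First I would reduce, as before, to finitely many planes $V_1,\ldots,V_l$ covering $G(n,n-m)$ via \eqref{eq:inkluusio}. Then argue by contradiction: fix $c>0$ and suppose that
$$
B_0 = \Bigl\{x\in\R^n : \limsup_{r\downarrow 0}\inf_{\theta,V_i}\frac{\mu\bigl(X(x,r,V_i,\alpha/2)\setminus H(x,\theta,\eta)\bigr)}{h(r)} < c\,\overline{D}(\mu,x)\Bigr\}
$$
has positive $\mu$-measure. Using $\overline{D}(\mu,x)<\infty$ $\mu$-a.e.\ together with a standard stratification-plus-Egorov argument, I would pass to a Borel subset $B\subset B_0$ of positive $\mu$-measure on which $D\leq\overline{D}(\mu,x)\leq 2D$ for some fixed $D>0$ and $\mu(B(x,r))\leq 3Dh(r)$ uniformly for all $x\in B$ and $0<r<r_0$; this replaces the upper bound \eqref{eq:x}. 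The density-point theorem for $\mu$ then yields $\mu$-a.e.\ $x\in B$ and arbitrarily small $r$ with $\mu(B\cap B(x,r))\geq\tfrac12 Dh(r)$, replacing \eqref{eq:alaarvio}.

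The remainder is the slicing construction of the previous proof. Set $t=\max\{5/\alpha,t(\eta)\}$ and let $q=q(n,\beta)$ come from Lemma~\ref{lemma:EF}. Cover the affine slice $(V_{i_0}^\perp+\{x\})\cap B(x,r)$ by $O_m(\eps^{-m})$ balls of radius $\eps r$, locate a slab $S$ with $\mu(S)\geq c_1 l^{-1}\eps^m Dh(r)$, and iteratively extract $q$ points $x_1,\ldots,x_q\in S$ with $|x_i-x_j|\geq t\eps r$ and $\mu(B\cap B(x_i,\eps r))\geq c_2 Dh(r)$. The decisive replacement of \eqref{eq:xxx} is the inequality
$$
c_1 l^{-1}\eps^m Dh(r)\;-\;(q-1)\cdot 3Dh(t\eps r)\;\geq\;\tfrac12 c_1 l^{-1}\eps^m Dh(r),
$$
which holds for some fixed $\eps=\eps(n,m,h,\alpha,\eta)>0$ uniformly in $r$, precisely by \eqref{eq:h} (the fixed factor $t$ is absorbed since $t\eps\downarrow 0$ as $\eps\downarrow 0$). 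Lemmas~\ref{lemma:EF} and~\ref{thm:etamato} together with the geometric picture of Figure~\ref{fig:nonsymmetric_conical} then pick out $z,w\in\{x_1,\ldots,x_q\}$ such that $B(w,\eps r)\subset X(z,3r,V_{i_0},\alpha/2)\setminus H(z,\theta,\eta)$ for every $\theta\in S^{n-1}$, yielding the lower bound $\mu\bigl(X(z,3r,V_{i_0},\alpha/2)\setminus H(z,\theta,\eta)\bigr)\geq c_2 Dh(r)$. Combined with $z\in B$, this produces a positive lower bound on $c$ and completes the contradiction.

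The routine part is the geometric extraction of $z$ and $w$, handled by the earlier lemmas. The main obstacle I anticipate is the bookkeeping that makes $\overline{D}(\mu,x)$ appear cleanly on the right-hand side of the conclusion: we need both an upper bound $\mu(B(x,r))$ at most a fixed multiple of $\overline{D}(\mu,x)h(r)$ and a lower bound $\mu(B\cap B(x,r))$ at least a fixed multiple of $\overline{D}(\mu,x)h(r)$ at the selected scale, which forces the dyadic stratification of $B$ together with the Egorov-type uniformization above. A secondary nuisance is that the final comparison is between the scales $r$ and $3r$; \eqref{eq:h} implies that $h$ is approximately monotone at small scales, but if $h$ is not locally comparable at scales $r$ and $3r$ one has to phrase the final inequality in terms of $h(3r)$ throughout by relabeling the outer scale, exactly as in the proof of Theorem~\ref{thm:nonsymmetric_conical}.
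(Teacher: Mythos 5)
Your proposal is essentially the paper's own strategy; the paper itself provides no detailed proof of Theorem \ref{thm:h} but only the remark ``a similar proof as above gives the following result,'' so your systematic substitutions ($\HH^s\mapsto\mu$, $r^s\mapsto h(r)$, $s>m$ replaced by \eqref{eq:h}) are exactly what is intended. The stratification of $B_0$ into a set $B'$ with $D\leq\overline{D}(\mu,\cdot)<2D$, the uniformization of the upper bound $\mu(B(x,r))\leq 3Dh(r)$, the use of the density point theorem for the lower bound, and the choice of $\eps$ by \eqref{eq:h} in place of the inequality $\eps^s\ll\eps^m$ are all correct and are the right analogues of \eqref{eq:x}, \eqref{eq:alaarvio}, and \eqref{eq:xxx}.

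The one place I would push back is the issue you flag in your final paragraph. You anticipate the need to compare $h(r)$ with $h(3r)$ at the end ($\mu(X(z,3r,V_{i_0},\alpha/2)\setminus H)\geq c_2Dh(r)$ must be played off against $\mu(X(z,3r,V_{i_0},\alpha/2)\setminus H)<2cD\,h(3r)$, which comes from $z\in B'$), but the proposed remedy of ``relabeling the outer scale'' does not by itself close the gap: wherever you place the density scale relative to the cone scale, the mismatch reappears as $h(\rho/3)$ versus $h(\rho)$, and \eqref{eq:h} alone gives no lower bound on $h(\rho/3)/h(\rho)$ (it only controls $h(\eps r)/h(r)$ for $\eps$ small, with the forced factor absorbed by the freedom in $\eps$; the fixed factor $3$ is outside that regime). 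In the original proof this step is silently absorbed into $c_2=c_13^{-s}4^{-m-1}l^{-1}\eps^m$ because $(3r)^s=3^sr^s$. For a general $h$ satisfying only \eqref{eq:h} you therefore need an additional (mild) regularity assumption, e.g.\ that $h$ is doubling near $0$, which is satisfied by all gauge functions mentioned in the paper and is almost certainly what the authors had in mind. I would make this assumption explicit rather than gesture at a relabeling. Apart from this point, and a couple of harmless constants (e.g.\ $\tfrac12 Dh(r)$ should really be something like $\tfrac14 Dh(r)$ once the density-point factor is included), your outline is sound.
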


Let us make few comments related to the above theorem. Suppose that
$h$ fulfills  condition  \eqref{eq:h}. Let $\HH_h$ be the generalized
Hausdorff measure which is constructed using $h$ as a gauge function,
see \cite[\S 4.9]{ma}. If $\mu=\HH_h|_A$, where $\HH_h(A)<\infty$, then
$\overline{D}(\mu,x)<\infty$ for $\mu$-almost every $x\in\R^n$, and
thus Theorem \ref{thm:h} can be applied.

There are many natural gauge functions, such as $h(r)=r^s\log(1/r)$
where $m<s<n$, which satisfy \eqref{eq:h}. However, some interesting
cases, such as $h(r)=r^m/\log(1/r)$, are not covered by this
condition.

It seems to be unknown whether a similar result as Theorem \ref{thm:h}
holds if one replaces the condition $\overline{D}(\mu,x)<\infty$ by
$\underline{D}(\mu,x)<\infty$. The most interesting example falling
into this category is obtained when $\mu=\mathcal{P}^s|_A$ and
$h(r)=r^s$, where $\mathcal{P}^s(A)<\infty$ and $m<s<n$. Here $\PP^s$ denotes the $s$-dimensional packing measure, see \cite[\S 5.10]{ma}. See also
Suomala \cite{S} for related theorems.

\section{Sets with large $k$-porosity}

Mattila \cite{ma2} proved Theorem \ref{thm:nonsymmetric_conical} in
the case $m=n-1$. Using this, he obtained the desired dimension bounds
for $1$-porous sets, see \eqref{eq:1por}. Our result for $k$-porous
sets follows applying a similar argument.

For $\sqrt{2}-1 < \roo < \tfrac{1}{2}$ we define
\begin{align*}
  t(\roo) &= \frac{1}{\sqrt{1-2\roo}}\,,\\
  \delta(\roo) &=\frac{1-\roo-\sqrt{\roo^2+2\roo-1}}{\sqrt{1-2\roo}}\,.
\end{align*}
Notice that $\delta(\roo) \to 0$ as $\roo \to \tfrac{1}{2}$.

\begin{lemma} \label{thm:delta_lemma}
  Suppose $x \in \R^n$, $r > 0$, $\sqrt{2}-1 < \roo < \tfrac{1}{2}$,
  $t=t(\roo)$, and $\delta=\delta(\roo)$. If $z \in \R^n \setminus \{ x \}$ is such that $B(z,\roo tr) \subset B(x,tr)$, then
  \begin{equation*}
    H(x+\delta r\theta,\theta) \cap B(x,r) \subset B(z,\roo tr),
  \end{equation*}
  where $\theta = (z-x)/|z-x|$.
\end{lemma}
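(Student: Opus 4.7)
The plan is to reduce the claimed containment to a one-variable quadratic inequality in $d := |z-x|$. First, I would fix an arbitrary point $w \in H(x+\delta r\theta,\theta) \cap B(x,r)$ and decompose $w - x = a\theta + u$ with $u \perp \theta$. The defining conditions translate to $a = (w-x)\cdot\theta > \delta r$ and $a^2 + |u|^2 \le r^2$, while $z - x = d\theta$ gives
\[
|w-z|^2 = (a-d)^2 + |u|^2 \le (a-d)^2 + r^2 - a^2 = d^2 - 2ad + r^2 \le d^2 - 2\delta r d + r^2,
\]
where the last step uses $d > 0$ together with $a > \delta r$.

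Next I would invoke the hypothesis $B(z,\roo tr) \subset B(x,tr)$, which forces $d \le (1-\roo)tr$, and reduce the problem to showing that the convex quadratic $q(d) := d^2 - 2\delta r d + r^2 - \roo^2 t^2 r^2$ is non-positive on $[0,(1-\roo)tr]$. By convexity it suffices to check the two endpoints. At $d=0$ the inequality $q(0)\le 0$ is simply $\roo^2 t^2 \ge 1$, which, after substituting $t^2 = 1/(1-2\roo)$, is equivalent to $\roo \ge \sqrt{2}-1$ and therefore holds. At $d = (1-\roo)tr$, using the identity $t^2(1-2\roo)=1$, a brief simplification shows that $q((1-\roo)tr) \le 0$ is equivalent to
\[
\delta \ge \frac{\sqrt{1-2\roo}}{1-\roo}.
\]

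The main remaining task, and the only step that requires genuine computation, is to verify that the prescribed $\delta(\roo)$ satisfies this lower bound. Clearing denominators and isolating the radical, the desired inequality is equivalent to $\roo^2 \ge (1-\roo)\sqrt{\roo^2 + 2\roo - 1}$. Since the assumption $\sqrt{2}-1 < \roo < \tfrac12$ makes both sides non-negative, squaring is reversible and reduces the inequality to $(2\roo-1)^2 \ge 0$, which is trivial (with equality at $\roo = \tfrac12$, so the prescribed $\delta$ is asymptotically sharp as $\roo \to \tfrac12$). I do not anticipate any conceptual obstacle: the geometry is essentially two-dimensional, taking place in the plane spanned by $\theta$ and the $\theta^\perp$-component of $w-x$, and the only delicate point will be keeping signs straight in the final algebraic verification, where the stated range of $\roo$ is precisely what is needed to justify the squarings.
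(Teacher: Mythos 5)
Your argument is correct, and it is a mild but genuine variant of the paper's. The paper normalizes ($x=0$, $r=1$, $\theta=e_1$) and proves the contrapositive: it takes $y\in B(0,1)\setminus B(z,\roo t)$ and shows $y_1\le\delta$ via Pythagoras, writing $y_1=|z|-\sqrt{|z-y|^2-|y-y_1e_1|^2}\le(1-\roo)t-\sqrt{(\roo t)^2-1}$, which equals $\delta(\roo)$ by construction, so no further algebra is needed. You instead prove the inclusion directly, using the same orthogonal splitting of $w-x$ along $\theta$, but you package the one-variable reduction as a convex quadratic $q(d)$ checked at both endpoints $d\in\{0,(1-\roo)tr\}$. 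The paper needs only the upper endpoint because its bound $|z|-\sqrt{(\roo t)^2-1}$ is monotone in $|z|$, whereas your framing also requires the $d=0$ check; in both proofs the same hypothesis $\roo>\sqrt2-1$ surfaces there, as the condition $\roo t\ge 1$. A nice byproduct of your route: by retaining the $-a^2$ term in $|u|^2\le r^2-a^2$ (which the paper discards when it uses $|y-y_1e_1|\le 1$), you show the conclusion already holds for the smaller, sharp threshold $\sqrt{1-2\roo}/(1-\roo)\le\delta(\roo)$, with the two quantities asymptotically equal as $\roo\to\tfrac12$ --- which is all that matters for the application in Theorem \ref{thm:iso_poro}.
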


\begin{proof}
  To simplify the notation, we assume $r=1$, $x=0$, and
  $\theta=e_1=(1,0,\ldots,0)$. This will not affect the
  generality. Let $y\in B(0,1)\setminus B(z,\roo t)$. We have to show
  that
  \begin{equation}\label{eq:notin}
    y\notin H(x+\delta\theta,\theta).
  \end{equation}
  By the Pythagorean Theorem we have
  \begin{equation*}
    |z-y_1|=\sqrt{|z-y|^2-|y-y_1|^2} \geq \sqrt{(\roo t)^2-1}.
  \end{equation*}
  Using this, we obtain
  \begin{equation*}
    y_1=|z|-|z-y_1|\leq t-\roo t-\sqrt{(\roo t)^2-1} = \delta,
  \end{equation*}
  which implies \eqref{eq:notin}.
\end{proof}

\begin{theorem} \label{thm:iso_poro}
  Suppose $0<k\le n$. Then
  \begin{equation*}
    \sup\{ s>0 : \por_k(A) > \roo \text{ and } \dimh(A)>s \text{ for
    some } A \subset \R^n \} \longrightarrow n-k
  \end{equation*}
  as $\roo \to \tfrac{1}{2}$.
\end{theorem}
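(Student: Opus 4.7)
The proof requires matching lower and upper bounds on the supremum.

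For the lower bound, take $A=[0,1]^{n-k}\times\{0\}^k\subset\R^n$ for every $\roo<\tfrac12$. At every interior point $x$, the centers $z_i=x+\tfrac{r}{2}e_{n-k+i}$ give pairwise orthogonal open balls $B(z_i,r/2)\subset B(x,r)\setminus A$, so $\por_k(A,x)=\tfrac12$; since $\dimh A=n-k$, the supremum is at least $n-k$ for every such $\roo$.

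The upper bound is the substantive content. Fix $\epsilon>0$, set $s=n-k+\epsilon$, and suppose toward a contradiction that $A\subset\R^n$ has $\por_k(A)>\roo$ and $\dimh A>s$. Porosity is preserved by passage to subsets (since $A'\subset A$ gives $\por_k(A',x,r)\ge\por_k(A,x,r)$), so we may assume $0<\HH^s(A)<\infty$. Choose $\alpha=\tfrac12$ and $\eta=1/(2k)$, so that $\eta k\sqrt{1+{\alpha'}^2}<1$ with $\alpha'=\alpha/\sqrt{1-\alpha^2}$. Theorem \ref{thm:nonsymmetric_conical} with $m=n-k$ then supplies $c=c(n,k,s,\alpha,\eta)>0$ and, for $\HH^s$-almost every $x\in A$, a sequence $r_j\downarrow 0$ along which $\HH^s\bigl(A\cap X(x,r_j,V,\alpha)\setminus H(x,\theta,\eta)\bigr)\ge\tfrac{c}{2}r_j^s$ for every $V\in G(n,k)$ and $\theta\in S^{n-1}$. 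Simultaneously, $k$-porosity at scale $t(\roo)r_j$ together with Lemma \ref{thm:delta_lemma} provides orthogonal unit vectors $\theta_1,\dots,\theta_k$ (depending on $j$) such that $A\cap B(x,r_j)\cap H(x+\delta(\roo)r_j\theta_i,\theta_i)=\emptyset$ for every $i$.

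Plug $V=W_{r_j}:=\mathrm{span}\{\theta_1,\dots,\theta_k\}$ and $\theta=-(\theta_1+\cdots+\theta_k)/\sqrt{k}$ into the infimum. For $y$ in the resulting intersection, write $y-x=w+v'$ with $w=\sum c_i\theta_i\in W_{r_j}$ and $v'\in W_{r_j}^\perp$. The cone condition gives $|v'|\le\alpha'|w|$, hence $|y-x|\le\sqrt{k(1+{\alpha'}^2)}\max_i|c_i|$; the forbidden half-spaces give $c_i\le\delta r_j$ for each $i$; and $y\notin H(x,\theta,\eta)$ reads $\sum_i c_i\ge-\eta\sqrt{k}\,|y-x|$. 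Set $M=\max_i|c_i|$. If $M$ is attained at a positive $c_i$, then $M\le\delta r_j$. Otherwise $c_j=-M<0$ for some $j$ and combining the three inequalities gives
\[
  M\le\eta\sqrt{k}\,|y-x|+(k-1)\delta r_j\le\eta k\sqrt{1+{\alpha'}^2}\,M+(k-1)\delta r_j,
\]
whence $M\le(k-1)\delta r_j/(1-\eta k\sqrt{1+{\alpha'}^2})$ by the choice of $\eta$. Either way $M=O(\delta r_j)$ and $|y-x|=O(\delta r_j)$ with constants depending only on $n,k,\alpha,\eta$, so the intersection sits inside $B(x,C_1\delta r_j)$. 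The upper density bound \eqref{eq:bd} now yields $\HH^s(A\cap B(x,C_1\delta r_j))\le C_2\delta^s r_j^s$ for $r_j$ small, forcing $c/2\le C_2\delta(\roo)^s$. Since $\delta(\roo)\downarrow 0$ as $\roo\uparrow\tfrac12$, this fails once $\roo$ is close enough to $\tfrac12$, and letting $\epsilon\downarrow 0$ finishes the proof.

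The main technical obstacle is the geometric confinement above: the bootstrap from the sum bound $\sum c_i\ge-\eta\sqrt{k}|y-x|$ to an individual coordinate estimate $|c_j|=O(\delta r)$ relies on the cone inequality $|y-x|\le\sqrt{k(1+{\alpha'}^2)}\max_i|c_i|$ and is feasible only because of the a priori choice $\eta k\sqrt{1+{\alpha'}^2}<1$.
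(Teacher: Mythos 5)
Your proposal is correct and follows essentially the same route as the paper: pass to a subset with $0<\HH^s<\infty$, use Lemma~\ref{thm:delta_lemma} to turn the $k$ orthogonal holes into forbidden half-spaces $H(x+\delta r\theta_i,\theta_i)$, choose $V=\mathrm{span}\{\theta_i\}$ and $\theta=-k^{-1/2}\sum\theta_i$ to trap $A\cap X(x,r,V,\alpha)\setminus H(x,\theta,\eta)$ in a ball of radius $O(\delta r)$ with $\alpha,\eta$ independent of $\roo$, and then play Theorem~\ref{thm:nonsymmetric_conical} against the upper density bound \eqref{eq:bd} to force $c\lesssim\delta(\roo)^s$, which fails as $\roo\to\tfrac12$. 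The only differences are cosmetic: you spell out explicit values $\alpha=\tfrac12$, $\eta=1/(2k)$ and carry out the coordinate bootstrap that the paper compresses into ``choosing $\alpha$ and $\eta$ small enough'' with a reference to a figure, and you also supply the easy lower bound $\sup\ge n-k$ via the plane $[0,1]^{n-k}\times\{0\}^k$, which the paper leaves implicit.
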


\begin{proof}
  Assume on the contrary that there exists $s > n-k$ such that for
  each $\sqrt{2}-1 < \roo < \tfrac{1}{2}$ there is a set $A_\roo$ for
  which $\dimh(A_\roo) > s$ and $\por_k(A_\roo) > \roo$.
  Take $\sqrt{2}-1 < \roo < \tfrac{1}{2}$ and such a set $A_\roo$. Now
  $A_\roo$ has a subset $B$ for which $\dimh(B) > s$ and
  $\por_k(B,x,r) > \roo$ for all $x \in B$ and $0<r<r_0$ with some
  $r_0>0$. Clearly also the closure of $B$ satisfies these
  conditions. Thus there is a closed set $F \subset \overline{B}$ (for
  example, use \cite[Theorem 5.4]{fa}) such that $0<\HH^s(F)<\infty$ and
  \begin{equation*}
    \por_k(F,x,r) > \roo \qquad \text{for all } x \in F \text{ and }
    0<r<r_0.
  \end{equation*}
  Therefore, for any $x \in F$ and $0<r<r_0/t$, there are
  $z_1,\ldots,z_k\in\R^n$ such that $B(z_i,\roo tr)\subset B(x,tr)\setminus F$
  for $i=1,\ldots,k$, and $(z_i-x) \cdot (z_j-x) = 0$ for $i \neq j$.
  Put $\theta_i=(z_i-x)/|z_i-x|$.
  \begin{figure}
    \psfrag{t1}{$\theta_1$}
    \psfrag{t2}{$\theta_2$}
    \psfrag{dr}{$\delta r$}
    \psfrag{x}{$x$}
    \psfrag{t}{$\theta$}
    \begin{center}
      \includegraphics[width=0.45\textwidth]{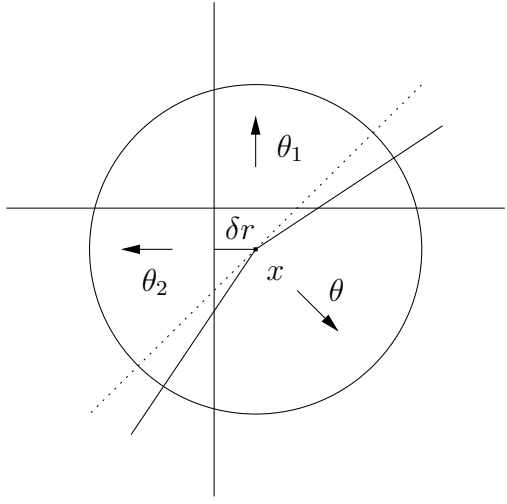}
    \end{center}
    \caption{Illustration for the proof of Theorem \ref{thm:iso_poro}: the situation when $n=2$ and $k=2$.}
    \label{fig:iso_poro}
  \end{figure}
  Applying now Lemma \ref{thm:delta_lemma}
  we have $H(x+\delta r\theta_i,\theta_i) \cap B(x,r) \subset
  B(z_i,\roo tr)$ for every $i$. Here $t=t(\roo)$ and
  $\delta=\delta(\roo)$. Thus
  \begin{equation} \label{eq:poistot}
    F \cap B(x,r) \subset \bigcap_{i=1}^k B(x,r) \setminus H(x+\delta
    r\theta_i,\theta_i).
  \end{equation}
  Put $\theta = -\tfrac{1}{\sqrt{k}} \sum_{i=1}^k \theta_i$ and take
  $V \in G(n,k)$ such that $\theta_i \in V$ for every $i$. Now
  choosing $\alpha$ and $\eta$ small enough, we have, using
  \eqref{eq:poistot}, that
  \begin{equation} \label{eq:poistot2}
    F \cap X(x,r,V,\alpha) \setminus H(x,\theta,\eta) \subset
    B(x,2 n^{1/2}\delta r).
  \end{equation}
  Observe that the choice of $\alpha$ and $\eta$ does not depend on
  $\delta$ and hence not on $\roo$ either.
  Figure \ref{fig:iso_poro} illustrates the situation. Using Theorem
  \ref{thm:nonsymmetric_conical}, we may fix
  $x \in F$ and $0<r<r_0/t$ for which
  \begin{equation} \label{eq:arvio1}
    \HH^s\bigl( F \cap X(x,r,V,\alpha) \setminus H(x,\theta,\eta) \bigr)
    \ge c2^{2s+1}n^{s/2} r^s,
  \end{equation}
  where $c=c(n,k,s,\alpha,\eta)>0$. By \eqref{eq:bd} we may assume that also
  \begin{equation}\label{eq:ub}
    \HH^s\bigl( F \cap B(x,2n^{1/2}\delta r) \bigr) \le
    2^{2s+1} n^{s/2} \delta^s r^s.
  \end{equation}
  Combining \eqref{eq:poistot2}--\eqref{eq:ub}, we have
  $c 2^{2s+1} n^{s/2} r^s \le 2^{2s+1} n^{s/2} \delta^s r^s$ and hence
  \begin{equation*}
    s \le \frac{\log{c}}{\log\delta(\roo)}.
  \end{equation*}
  But the constant $c$ does not depend on $\roo$, and thus
  $\log c/\log \delta(\roo)\rightarrow 0$ as $\roo\rightarrow\tfrac{1}{2}$
  giving a contradiction.
\end{proof}

\section{Sets with small porosity}

Finally, let us briefly discuss the situation when porosity is small.
The proof of the following theorem can be found for example in  Martio
and Vuorinen \cite{MV}. We
shall give here a different proof, and then show how the theorem can
be improved when more information on the location of the holes is
given.

\begin{theorem}\label{thm:MV}
  Let $A\subset\R^n$ be bounded and suppose that $\por_1(A,x,r) \ge \roo$
  for all $x\in A$ and $0<r<r_0$. Then $\dimm(A) < n-c\roo^n$, where
  $c>0$ depends only on $n$.
\end{theorem}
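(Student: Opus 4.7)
The plan is to exploit porosity to iteratively cover $A$ by balls of geometrically decreasing radius, as suggested by the authors' hint about iterated function systems. First, I fix a contraction ratio $\lambda \in (0, \roo)$ to be chosen later. For any $x \in A$ and $r \in (0, r_0)$, the porosity hypothesis provides $z$ with $B(z, \roo r) \subset B(x, r) \setminus A$, so $A \cap B(x, r) \subset B(x, r) \setminus B(z, \roo r)$. A lattice cover of $B(x, r)$ uses at most $c_1(n) \lambda^{-n}$ balls of radius $\lambda r$; those whose centers lie in $B(z, (\roo - \lambda)r)$ are entirely inside the hole and miss $A$. The number of such discarded balls is at least $c_2(n)((\roo-\lambda)/\lambda)^n$, leaving a cover of $A \cap B(x, r)$ by at most
\[
N \le c_1(n)\lambda^{-n} - c_2(n)\Bigl(\frac{\roo-\lambda}{\lambda}\Bigr)^{n}
\]
balls of radius $\lambda r$ with centers in $B(x,r)$.

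Next, I iterate. Starting from a single ball $B(x_0, r_0) \supset A$ and applying the local cover recursively, at each scale $\lambda^k r_0$ the set $A$ is covered by at most $N^k$ balls of radius $\lambda^k r_0$, producing the IFS-like cover advertised in the introduction. The standard Moran-type estimate then yields $\dimm(A) \le \log N/\log(1/\lambda)$. Rewriting $N$ as $c_1(n)\lambda^{-n}(1 - (c_2(n)/c_1(n))(\roo - \lambda)^n)$ and using $\log(1-t) \le -t$ for $t \in (0,1)$, this gives
\[
\dimm(A) \le n + \frac{\log c_1(n)}{\log(1/\lambda)} - \frac{c_2(n)(\roo - \lambda)^n}{c_1(n)\log(1/\lambda)}.
\]
A choice such as $\lambda = \roo/2$ makes the saving term of order $\roo^n/\log(1/\roo)$, and a sharper accounting of the constants should lead to the claimed estimate $n - c(n)\roo^n$.

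The main obstacle is the positive overhead $\log c_1(n)/\log(1/\lambda)$ arising from the inefficiency of any ball cover of $B(x,r)$: since $c_1(n) > 1$, for fixed $\lambda$ this term can exceed the negative $\roo^n$-term when $\roo$ is small. Guaranteeing a constant $c$ depending only on $n$ therefore requires a more delicate IFS construction than the naive lattice cover --- for instance, packing $\lambda r$-balls tightly inside the hole and subtracting them from an optimised cover, or combining several porosity scales in a single IFS step so that the overhead is amortised across multiple levels. Handling this balancing cleanly is the simplification that the iterated function system viewpoint is intended to provide, and it is where I would expect to spend the bulk of the effort.
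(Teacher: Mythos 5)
You have correctly identified the fatal obstacle in your own argument, but you have not resolved it, so this is a genuine gap rather than a complete proof. With ball covers, your upper bound for the Minkowski dimension is $\log N/\log(1/\lambda)$ with $N\le c_1(n)\lambda^{-n}-c_2(n)\bigl((\roo-\lambda)/\lambda\bigr)^n$, and since balls cannot tile a ball you necessarily have $c_1(n)>1$. Taking $\lambda\sim\roo$, the overhead term $\log c_1(n)/\log(1/\lambda)$ is of order $1/\log(1/\roo)$ while your savings term is only of order $\roo^n/\log(1/\roo)$; for small $\roo$ the bound you obtain is actually \emph{larger} than $n$. No choice of the single contraction ratio $\lambda$ repairs this, because the loss is structural, coming from the overlap inherent to any ball cover of a ball; your suggestions at the end (tighter packings, amortising over several scales) do not remove it.

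The paper's proof eliminates the overhead by replacing balls with closed dyadic cubes. A dyadic cube of sidelength $2^{-j}$ is partitioned \emph{exactly} into $2^{n}$ dyadic subcubes of sidelength $2^{-j-1}$, so the analogue of your constant $c_1(n)$ equals $1$. Porosity at scale comparable to $2^{-j}$ guarantees that every $Q\in\QQ_j$ contains a subcube $Q'\in\QQ_{j+l}$ with $Q'\cap A=\emptyset$, where $l$ is the smallest integer with $2^{-l+2}<\roo/\sqrt n$. Removing such a hole from \emph{every} dyadic cube at \emph{every} level $j$ (not just once per $l$ generations as in your iteration) produces a self-similar set $E\supset A$ generated by $l(2^n-1)$ similitudes with ratios $2^{-1},\dots,2^{-l}$, whose dimension solves $(2^n-1)\sum_{i=1}^{l}2^{-is}=1$. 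Since $2^{-l}\gtrsim\roo/\sqrt n$, this gives $\dimm(A)\le\dimm(E)\le n-c(n)2^{-ln}\le n-c(n)\roo^n$. Two ingredients are therefore missing from your outline: the exact tiling by cubes, which kills the $\log c_1(n)/\log(1/\lambda)$ overhead entirely, and the insertion of one hole in every dyadic cube at every level rather than one hole per $l$ levels, which upgrades the savings from order $\roo^n/\log(1/\roo)$ to order $\roo^n$.
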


\begin{proof}
  We may assume that $r_0=1$ and $A\subset[0,1]^n$. Let us denote by
  $\mathcal{Q}_j$ the collection of all closed dyadic cubes
  $Q\subset[0,1]^n$ with side length $2^{-j}$. Let $l$ be the smallest
  integer with $2^{-l+2}<\roo/\sqrt{n}$. It is easy to see that for
  any $Q\in\mathcal{Q}_j$ there is $Q'\in\mathcal{Q}_{j+l}$ such that
  $Q' \subset Q$ and $Q'\cap A=\emptyset$. Let us fix one such $Q'$
  for each $Q\in\bigcup_{j=1}^{\infty}\mathcal{Q}_j$. Next we define
  a set $B\subset[0,1]^n$ by setting
  \begin{equation} \label{eq:beendef}
    B=[0,1]^n\setminus \bigcup\limits_{j=0}^{\infty}
    \bigcup\limits_{Q\in\mathcal{Q}_j}Q'.
  \end{equation}
  For any $Q\in\mathcal{Q}_j$, let $x_Q$ be the corner of $Q$ which is
  nearest to the origin, and let $\widetilde{Q} = \{x_Q\} +
  [0,2^{-j-1}]^n$. If we define $E \subset [0,1]^n$ by setting
  \begin{equation*}
    E = [0,1]^n \setminus \bigcup\limits_{j=0}^{\infty}
    \bigcup\limits_{Q\in\mathcal{Q}_j} \textrm{int}\widetilde{Q},
  \end{equation*}
  \begin{figure}
    \psfrag{1}{$f_1$}
    \psfrag{2}{$f_2$}
    \psfrag{3}{$f_3$}
    \psfrag{4}{$f_4$}
    \psfrag{5}{$f_5$}
    \psfrag{6}{$f_6$}
    \psfrag{7}{$f_7$}
    \psfrag{8}{$f_8$}
    \psfrag{9}{$f_9$}
    \begin{center}
      \includegraphics[width=0.4\textwidth]{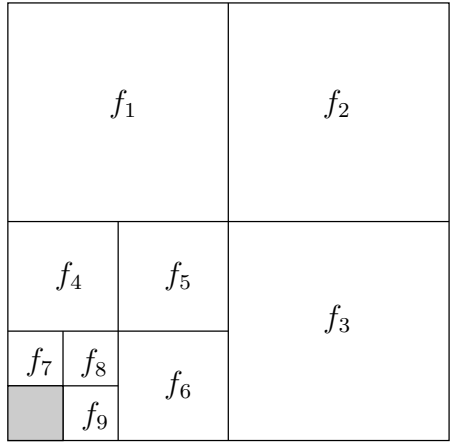}
    \end{center}
    \caption{Similitudes $f_k$ in the proof Theorem \ref{thm:MV} when $n=2$ and $l=3$.}
    \label{fig:sim}
  \end{figure}%
  where $\textrm{int}$ denotes the interior of a given set,
  then obviously $\dimm(E) \geq \dimm(B)$, see also \cite{K}.
  The set $E$ is the limit set of the iterated
  function system defined by the similitudes $f_k$,
  $k \in \{ 1,2,\ldots,l(2^n-1) \}$, see Figure \ref{fig:sim}.
  For any $i \in \{ 1,\ldots,l \}$, there are $2^n-1$ similitudes among
  $\{f_k\}_{k=1}^{l(2^n-1)}$ with contraction ratio $2^{-i}$. Since
  the open set condition is clearly satisfied, the dimension
  $s=\dimm(E) = \dimh(E)$ is given by
  \begin{equation} \label{eq:self_sim_dim}
    (2^n-1)\sum\limits_{i=1}^{l}2^{-is}=1,
  \end{equation}
  see Hutchinson \cite[\S 5]{H}. This reduces to
  \begin{equation*}
    2^{n-s}=1+(2^n-1)2^{-(l+1)s}
  \end{equation*}
  and since $\log_2(1+x) \ge x/\bigl((1+x)\log 2\bigr)$ for $x \ge 0$,
  we have
  \begin{equation*} \begin{split}
    s&=n-\log_2\bigl(1+(2^n-1)2^{-(l+1)s}\bigr) \\
    &\leq n-\log_2\bigl(1+(1-2^{-n})2^{-ln}\bigr) \\
    &\leq n-\frac{2}{5\log 2} 2^{-ln}\leq n-c\roo^n,
    \end{split} \end{equation*}
  where $c=\bigl(2/(5\log 2)\bigr)2^{-3n}n^{-n/2}$. Because $A\subset B$ and
  $\dimm(B)\leq\dimm(E)=s$, we conclude that also $\dimm(A)\leq
  n-c\roo^n$.
\end{proof}

In the above proof, the use of the self-similar set $E$ is not a necessity, but it concretizes the situation. The key point in the proof is that for any cube $Q \subset \R^n$ which is small enough, one can find subcubes $Q_1,\ldots,Q_{l(2^n-1)} \subset Q$ such that $A \cap Q \subset \bigcup_{i=1}^{l(2^n-1)} Q_i$ and $\sum_{i=1}^{l(2^n-1)} \diam(Q_i)^s = \diam(Q)^s$, where $s$ is given by \eqref{eq:self_sim_dim}. From this the desired dimension bound follows easily.

\begin{remark}
  In a sense the above result is the best possible one. There is a
  constant $c'=c'(n)>0$ and sets $A_\roo$, $0<\roo<1/2$, with
  $\dimh(A_\roo) > n-c'\roo^n$, and $\por_1(A_\roo,x,r) \ge \roo$ for
  all $r>0$ and
  $x\in\R^n$. See, for example, Koskela and Rohde \cite{KR}, or
  estimate the Hausdorff dimension of the set $E$ from below.
\end{remark}

\begin{theorem}\label{thm:sporo}
  Let $A\subset\R^n$ be bounded and suppose that there is $V\in
  G(n,m)$ such that for all $x\in A$ and $0<r<r_0$ one has
  \begin{equation}\label{eq:sporo}
    \sup\bigl\{\roo' : B(z,\roo' r)\subset B(x,r)\setminus A\text{ for some
    }z\in V+\{x\}\bigr\}\geq\roo.
  \end{equation}
  Then $\dimm(A)<n-c\roo^m$, where $c>0$ depends only on $n$ and $m$.
\end{theorem}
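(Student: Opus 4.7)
The plan is to reduce Theorem \ref{thm:sporo} to Theorem \ref{thm:MV}, applied inside the $m$-plane $V$. After an affine change of coordinates we may assume $V = \R^m \times \{0\}$, $A \subset [0,1]^n = [0,1]^m \times [0,1]^{n-m}$, and $r_0 = 1$; write $\pi_V$ for orthogonal projection onto $V$. For a small scale $\eps > 0$, I would cover $[0,1]^{n-m}$ by $O(\eps^{-(n-m)})$ balls of radius $\eps$ with centers $\{y_j\}$, and study the slab pieces $A_j^\eps = A \cap ([0,1]^m \times B^{n-m}(y_j,\eps))$ via their projections $\pi_V(A_j^\eps) \subset [0,1]^m$.

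The key geometric observation is that every such projection is $(\roo/2)$-porous in $\R^m$ at all scales above $3\eps/\roo$. More precisely, I would show: for every $u \in \pi_V(A_j^\eps)$ and every $r \in [3\eps/\roo,1]$ there is $\zeta \in B^m(u,r)$ with $B^m(\zeta,\roo r/2) \cap \pi_V(A_j^\eps) = \emptyset$. Indeed, writing $u = \pi_V(x)$ for some $x \in A_j^\eps$ and applying \eqref{eq:sporo} at $x$ with scale $r$ produces $z \in V + \{x\}$ with $B(z,\roo r) \subset B(x,r) \setminus A$; take $\zeta = \pi_V(z) = z_V \in B^m(u,r)$. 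If some $w \in A_j^\eps$ had $\pi_V(w) \in B^m(\zeta,\roo r/2)$, then since $z_{V^\perp} = x_{V^\perp}$ and both $w_{V^\perp}, x_{V^\perp} \in B^{n-m}(y_j,\eps)$, one would obtain $|w - z|^2 \le (\roo r/2)^2 + (2\eps)^2 < (\roo r)^2$ (the last step uses $\eps \le \roo r/3 \le \roo r\sqrt{3}/4$), so $w \in B(z,\roo r) \cap A = \emptyset$, a contradiction.

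With this partial porosity in hand, Theorem \ref{thm:MV} applied in $\R^m$ to $\pi_V(A_j^\eps)$ yields a uniform dyadic covering estimate $N_{3\eps/\roo}(\pi_V(A_j^\eps)) \le C(3\eps/\roo)^{-(m - c_0 \roo^m)}$, where $c_0 = c_0(m) > 0$ and $C = C(m,\roo)$ are independent of $j$. Passing from scale $3\eps/\roo$ to scale $\eps$ costs at most a factor $(3/\roo)^m$, and lifting an $\eps$-cover of $\pi_V(A_j^\eps)$ back to an $\eps$-cover of $A_j^\eps \subset \pi_V(A_j^\eps) \times B^{n-m}(y_j,\eps)$ in $\R^n$ costs only a further $O(1)$. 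Summing over the $O(\eps^{-(n-m)})$ slabs gives $N_\eps(A) \lesssim \eps^{-(n - c_0 \roo^m)}$, which yields $\dimm(A) \le n - c_0 \roo^m$ and hence the theorem (with a slight decrease of the constant to turn $\le$ into $<$).

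The main subtlety I expect to be the real point to address is that Theorem \ref{thm:MV}, as stated, requires porosity down to scale $0$, whereas here one only has it down to scale $3\eps/\roo$. This should be handled by inspecting the proof of Theorem \ref{thm:MV}: its dyadic/IFS iteration only ever consumes porosity at the scales it actually visits, so truncating at scale $3\eps/\roo$ simply produces a covering estimate at that scale with the same universal exponent, and with a constant depending only on $m$ and $\roo$. This is the step that requires the most care, but it should not present a genuine obstacle.
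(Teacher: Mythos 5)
Your proposal is correct in outline and arrives at the same quantitative conclusion, but it takes a different route from the paper's proof. The paper stays entirely inside a single dyadic-cube framework in $\R^n$: it fixes $V=\R^m$, decomposes $\R^n$ into $V^{\perp}$-columns of dyadic cubes, and shows (the ``key fact'') that in each column one can remove, at every step from scale $2^{-j}$ to $2^{-j-l}$, one whole dyadic subcolumn disjoint from $A$. It then compares the surviving $V$-shadows directly with the finite-scale covering numbers $a_j$ of the self-similar model set $E\subset V$ already built in the proof of Theorem~\ref{thm:MV}, and counts $N_{2^{-j}}(A)\le 2^{j(n-m)}a_j$. Because the $a_j$ are by definition covering numbers of $E$ at each finite scale $2^{-j}$, no truncation of Theorem~\ref{thm:MV} is needed; the finite-scale estimate is built in from the start. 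Your version instead fixes a small scale $\eps$, slices $A$ into $V^\perp$-slabs of width $\eps$, projects each slab onto $V$, and proves that the projection is genuinely $(\roo/2)$-porous in $\R^m$ for all radii $r\ge 3\eps/\roo$; your computation of this (using $z_{V^\perp}=x_{V^\perp}$ and $(2\eps)^2<\tfrac34(\roo r)^2$ when $\eps\le\roo r/3$) is correct, including the containment $B^m(\zeta,\roo r/2)\subset B^m(u,r)$. The price of this route is exactly the point you flag yourself: Theorem~\ref{thm:MV} as stated assumes porosity at all scales, so you must reopen its proof and extract a finite-scale statement ``$N_{2^{-J}}\lesssim 2^{J(m-c_0\roo^m)}$ whenever porosity holds down to scale $\sim 2^{-J}$,'' with constants uniform over the slabs. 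This is true and your reason is the right one (the dyadic/IFS recursion only consumes porosity at the scales it visits), but it is precisely the bookkeeping the paper packages once and for all through the $a_j$. In short: same underlying mechanism (reduce to the $m$-dimensional MV count and multiply by $2^{j(n-m)}$ for the transverse directions), but the paper discretizes first and never projects, which lets it cite $a_j$ directly, whereas you project first and therefore need a mildly scale-truncated version of Theorem~\ref{thm:MV}, which you correctly identify as the only step requiring extra care.
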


\begin{proof}
  Without losing the generality we may assume that
  $V=\R^m= \{ x \in \R^n : x_{m+1}=x_{m+2}=\ldots=x_n=0\}$,
  $r_0=\sqrt{n}$, and $A\subset[0,1]^n$. Let $\mathcal{Q}_j$ be, as
  before, the collection of all closed dyadic cubes $Q\subset[0,1]^n$
  with side length $2^{-j}$, and let $\mathcal{\widetilde{Q}}_j = \{
  P_V(Q) : Q \in \QQ_j \}$ and $\mathcal{Q}'_j = \{ P_{V^\bot}(Q) : Q
  \in \QQ_j \}$. Here $P_V$ is the orthogonal projection onto
  $V$. Furthermore, let $l$ be the smallest integer with
  $2^{-l+2}<\roo/\sqrt{n}$.

  We define a set $E = E_{l,m} \subset V$ as in the proof of Theorem
  \ref{thm:MV}. For $j\in\N$ we let $a_j=a_{j,l,m}$ denote the minimum
  number of cubes from the collection $\mathcal{\widetilde{Q}}_j$ that
  are needed to cover $E$. The proof of Theorem \ref{thm:MV}
  yields that
  \begin{equation}\label{eq:aj}
    \lim\limits_{j\rightarrow\infty}
    \frac{\log a_j}{\log(2^j)}\leq m-c2^{-ml},
  \end{equation}
  where $c > \tfrac{1}{2}$ is an absolute constant.

  It is straightforward to convince oneself of the following fact: If
  $\widetilde{Q}\in\mathcal{\widetilde{Q}}_j$ and
  $Q' \in \mathcal{Q}'_{j+l}$,
  then there is $Q\in\mathcal{Q}_{j+l}$ such that
  $P_{V^\perp}(Q) = Q'$,
  $P_V (Q)\subset
  \widetilde{Q}$, and $A\cap Q=\emptyset$. From this observation it
  follows that given $Q'\in\mathcal{Q}'_j$, only $a_j$ cubes from the
  collection $\{Q\in\mathcal{Q}_j : P_{V^\perp}(Q)=Q'\}$ touch the set
  $A$. Thus only $2^{j(n-m)}a_j$ cubes from the collection
  $\mathcal{Q}_j$ are needed to cover $A$. Using \eqref{eq:aj}, we
  calculate
  \begin{equation*}
  \begin{split}
    \dimm(A) &\leq \limsup\limits_{j\downarrow 0}
    \frac{\log(2^{j(n-m)}a_j)}{\log(2^j)} =
    n-m+\limsup\limits_{j\downarrow 0}
    \frac{\log a_j}{\log(2^j)} \\
    &\leq n-c2^{-ml}\leq n-c2^{-3m}n^{-m/2}\roo^m.
  \end{split}
  \end{equation*}
  The proof is finished.
\end{proof}

\begin{remark}
  Suppose that $V\in G(n,m)$ is fixed and $A\subset\R^n$ is such that
  \eqref{eq:sporo} holds for every $x\in A$ and $0<r<r_x$, where $r_x>0$
  depends on the point $x$. It follows immediately from Theorem
  \ref{thm:sporo} that $\dimh(A)\leq\dimp(A)\leq n-c\roo^m$, where $c$
  is as in Theorem \ref{thm:sporo} and $\dimp$ denotes the packing
  dimension, see \cite[\S 5.9]{ma}. The above dimension estimates are
  also sharp. Consider, for example, sets of the form
  $E\times\R^{n-m}$, where $E\subset\R^m$ is as in the proof of
  Theorem \ref{thm:MV}.
\end{remark}

\begin{remark}
  After the submission of this article in May 2004, there has been considerable progress in the study of conical densities and porosities. Most notably, the question posed after Theorem \ref{thm:h} has been answered positively in \cite{KaenmakiSuomala2008}. For improvements of Theorems \ref{thm:iso_poro} and \ref{thm:MV}, see \cite{JarvenpaaJarvenpaaKaenmakiSuomala2005} and \cite{JarvenpaaJarvenpaaKaenmakiRajalaRogovinSuomala2007}, respectively.
\end{remark}


\end{document}